\newtheorem{thm}{Theorem}[section]
\newtheorem{cor}[thm]{Corollary}
\newtheorem{remark}[thm]{Remark}
\newtheorem{prop}[thm]{Proposition}
\newtheorem{exam}[thm]{Example}
\newtheorem{defn}[thm]{Definition}
\numberwithin{equation}{section}
\newcommand{\bb}[1]{\mathbb{#1}}
\newcommand{\cl}[1]{\mathcal{#1}}
\newcommand{\cW}{\mathcal{W}}
\newcommand{\Wmax}[1]{\mathcal{W}^{#1\text{-}\mathrm{max}}}
\newcommand{\Wmin}[1]{\mathcal{W}^{#1\text{-}\mathrm{min}}}
\newcommand{\smboxplus}{\text{\footnotesize{$\boxplus$}}}
\newcommand{\smboxplusraised}{\raisebox{0.01 in}{\text{\footnotesize{$\boxplus$}}}}
\begin{document}

\title[Matrix Range Characterizations of Operator System Properties]{Matrix Range Characterizations of Operator System Properties}
\author[B. Passer]{Benjamin Passer}
\address{Department of Pure Mathematics, University of Waterloo,
Waterloo, ON,  N2L 3G1, Canada}
\email{passbend@gmail.com}
\author[V.~I.~Paulsen]{Vern I.~Paulsen}
\address{Institute for Quantum Computing and Department of Pure Mathematics, University of Waterloo,
Waterloo, ON,  N2L 3G1, Canada}
\email{vpaulsen@uwaterloo.ca}


\begin{abstract} For finite-dimensional operator systems $\mathcal{S}_{\sf T}$, ${\sf T} \in B({\cl H})^d$, we show that the local lifting property and $1$-exactness of $\mathcal{S}_{\sf T}$ may be characterized by measurements of the disparity between the matrix range $\cW({\sf T})$ and the minimal/maximal matrix convex sets over its individual levels. We then examine these concepts from the point of view of free spectrahedra, direct sums of operator systems, and products of matrix convex sets.
\end{abstract}

\maketitle

\section{Introduction} 

Since operator systems give rise to matrix convex sets, and vice versa, it is possible to view results from one category in terms of the other. In this paper, we seek to unify these points of view somewhat. We first highlight some overlap in the theories, then discuss how inclusion constants of matrix convex sets relate to abstract operator system properties, such as local lifting and $1$-exactness. Finally, we point out how products of matrix convex sets, similarly direct sums of operator systems, interact with these constants.

Given a $d$-tuple ${\sf T} = (T_1, \ldots, T_d) \in B({\cl H})^d$ of bounded operators on a Hilbert space $H$, one may consider the finite-dimensional operator system
\[ \cl S_{\sf T} := \text{span} \{ I, T_1, \ldots ,T_d, T_1^*, \ldots  T_d^* \}  \]
which the operators $T_i$ generate inside $B(\cl H)$. Similarly, the {\bf (joint) matrix range} of ${\sf T}$ is defined as the matrix convex set
\[ \cW({\sf T}) = \bigcup_{n=1}^\infty \cW_n({\sf T}), \] 
where 
\[ \cW_n({\sf T}) = \{ (\phi(T_1), \ldots, \phi(T_d)): \phi: \cl S_{\sf T} \to M_n \text{ is UCP} \}\]
denotes the collection of unital completely positive images of $\sf T$ into the set of $n \times n$ complex matrices. Hence, we regard $\cW_n({\sf T})$ as a subset of the vector space of $d$-tuples of $n \times n$ matrices $M_n^d$ and $\cW({\sf T})$ as a subset of the disjoint union $\bigcup\limits_{n=1}^\infty M_n^d$, which we denote as $\mathbb{M}^d$.

By Arveson's extension theorem \cite[Theorem 1.2.3]{Arv_sub}, any UCP map $\cl S_{\sf T} \to M_n$ extends to a UCP map $B( \cl H) \to M_n$, so it also holds that
\[ \cW_n({\sf T}) = \{ (\phi(T_1), \ldots, \phi(T_d)): \phi: \cl B( \cl H) \to M_n \text{ is UCP} \}.\]
The matrix range $\cW({\sf T})$ is an example of a closed and bounded matrix convex set, and moreover, every closed and bounded matrix convex set over $\mathbb{C}^d$ is of the form $\cW({\sf T})$ for some tuple ${\sf T}$ of bounded operators by \cite[Proposition 3.5]{DDSS}. 

Existence of UCP maps between finite-dimensional operator systems (with specified bases) is encoded in the matrix range. Given $\cl S_{\sf T}$ and $\cl S_{\sf R}$ of the same dimension, the map $\Psi: \cl S_{\sf T} \to \cl S_{\sf R}$ given by 
\[\Psi(I) = I, \hspace{.2 in} \Psi(T_i) = R_i,  \hspace{.2 in} \Psi(T_i^*) = R_i^*\]
is UCP if and only if $\cW({\sf R}) \subseteq \cW({\sf T})$ by \cite[Theorem 5.1]{DDSS}. Further, $\Psi$ is a complete order isomorphism if and only if $\cW({\sf T}) = \cW({\sf R})$.

Considering $\cW({\sf T})$ in place of $\mathcal{S}_{\sf T}$ allows one to use the Hausdorff distance in the matrix norm to derive consequences about UCP maps. For example, in the case $d = 1$, this point of view is pursued in \cite{Pa-1982} in order to connect approximation properties to the Smith-Ward problem. Similarly, one may consider a related scaling problem: what is the smallest $C > 0$ so that $\cW({\sf R}) \subseteq C \, \cW({\sf T})$, or equivalently that the map $\Psi: \mathcal{S}_{\sf T} \to \mathcal{S}_{\sf R}$ given by
\[ \Psi(I) = I, \hspace{.2 in} \Psi(T_i) = \frac{1}{C} \, R_i, \hspace{.2 in} \Psi(T_i^*) = \frac{1}{C} \, R_i^* \]
is UCP? This problem is considered in \cite{DDSS,Pas-Sh-So, Pas_SSM} for matrix ranges arising in different contexts.  Connections between these ideas and essential matrix ranges are explored in \cite{Li_essMR}.

A common concern which connects the above works is the disparity between $\cW({\sf T})$ and the smallest and largest matrix convex sets generated over $\cW_k({\sf T})$. These objects correspond to the earlier concepts of $k$-minimal and $k$-maximal operator system structures from \cite{Xh-phd, Xh-JFA}, and they were subsequently examined in \cite{Kriel} from the matrix convex point of view (generalizing results in \cite{DDSS} for $k = 1$). In this manuscript, we will connect the approximation of $\cW({\sf T})$ by these sets to operator system properties, namely the lifting property and $1$-exactness. Our main result is as follows.

\begin{thm}\label{thm:main}
Let ${\sf T} \in B(\cl H)^d$. Then $\mathcal{S}_{\sf T}$ has the lifting property if and only if the maximal matrix convex sets over $\cW_k({\sf T})$ converge to $\cW({\sf T})$ in the Hausdorff distance as $k \to +\infty$. Similarly, $\mathcal{S}_{\sf T}$ is $1$-exact if and only if the minimal matrix convex sets over $\cW_k({\sf T})$ converge to $\cW({\sf T})$ in the Hausdorff distance as $k \to +\infty$.
\end{thm}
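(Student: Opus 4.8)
The plan is to collapse each Hausdorff-convergence statement into the behavior of a single scalar, and then to recognize that scalar as a completely bounded distance whose limit detects the operator system property.

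First I would normalize and set up a dictionary. After replacing each $T_i$ by $T_i - \lambda_i I$ we may assume $0$ lies in the interior of $\cW_1({\sf T})$; since $\cW({\sf T})$ is bounded there are radii $0 < r \le R$ so that, uniformly over every matrix level $n$, the ball of radius $r$ in $M_n^d$ sits inside $\cW_n({\sf T})$ and $\cW_n({\sf T})$ sits inside the ball of radius $R$ (the inner bound is the standard fact that an interior point at level $1$ forces a uniform matricial ball). Because $\Wmin{k}({\sf T}) \subseteq \cW({\sf T}) \subseteq \Wmax{k}({\sf T})$, the presence of these uniform balls makes Hausdorff convergence equivalent to a multiplicative statement: writing $c_k \ge 1$ for the least constant with $\cW({\sf T}) \subseteq c_k\,\Wmin{k}({\sf T})$ (respectively $\Wmax{k}({\sf T}) \subseteq c_k\,\cW({\sf T})$), one has $\Wmin{k}({\sf T}) \to \cW({\sf T})$ (respectively $\Wmax{k}({\sf T}) \to \cW({\sf T})$) in Hausdorff distance if and only if $c_k \to 1$. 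By \cite{Xh-phd, Xh-JFA, Kriel} (and \cite{DDSS} for $k = 1$), $\Wmin{k}({\sf T})$ and $\Wmax{k}({\sf T})$ are the matrix ranges of the $k$-minimal and $k$-maximal operator system structures on $\cS_{\sf T}$, so by the UCP--inclusion correspondence \cite[Theorem 5.1]{DDSS} each $c_k$ is exactly the completely bounded Banach--Mazur distance between $\cS_{\sf T}$ and its $k$-minimal (respectively $k$-maximal) structure. The theorem thus reduces to showing that this distance tends to $1$ precisely when $\cS_{\sf T}$ is $1$-exact (respectively has the lifting property).

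For the $1$-exact half I would exploit that $\Wmin{k}({\sf T})$ is generated by the single level $\cW_k({\sf T})$, i.e. that the $k$-minimal structure $\mathrm{OMIN}_k(\cS_{\sf T})$ has its positivity detected entirely by the unital completely positive maps $\cS_{\sf T} \to M_k$. Consequently $\cW({\sf T}) \subseteq c_k\,\Wmin{k}({\sf T})$ with $c_k \to 1$ says exactly that $\cS_{\sf T}$ is approximated, in completely bounded distance tending to $1$, by unital complete order embeddings into matrix algebras assembled from its level-$k$ matrix range. This is the finite-dimensional description of $1$-exactness, so matching the two yields $1$-exact $\iff c_k \to 1 \iff \Wmin{k}({\sf T}) \to \cW({\sf T})$. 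The reverse implication is immediate, since uniform closeness manufactures the required embeddings; the forward implication is where the substance lies. For the lifting-property half I would pass to the operator system dual: for finite-dimensional operator systems $\cS_{\sf T}$ has the lifting property if and only if the dual $\cS_{\sf T}^{\,d}$ is $1$-exact (the lifting property / exactness duality of Kavruk--Paulsen--Todorov--Tomforde), the dual operation interchanges the $k$-minimal and $k$-maximal structures, and on the matrix-range side it is implemented by a polar duality that is bi-Lipschitz on the normalized class of bounded matrix convex sets with $0$ in the interior. Transporting $\Wmax{k}({\sf T}) \to \cW({\sf T})$ through this polar duality turns it into $\Wmin{k} \to \cW$ for the dual, whereupon the $1$-exact half applied to $\cS_{\sf T}^{\,d}$ finishes the argument; alternatively one argues directly that control of $c_k$ for the $k$-maximal structures is the approximate lifting of unital completely positive maps out of $\cS_{\sf T}$ through the tower $\mathrm{OMAX}_k(\cS_{\sf T})$.

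The main obstacle is uniformity across matrix levels. For each fixed $n$ one always has $\bigl(\Wmin{k}({\sf T})\bigr)_n \uparrow \cW_n({\sf T})$ and $\bigl(\Wmax{k}({\sf T})\bigr)_n \downarrow \cW_n({\sf T})$ as $k \to \infty$, so the entire content is to upgrade this level-by-level convergence to a single $k$ forcing $c_k < 1 + \epsilon$ simultaneously at all $n$. This is exactly where the finite-dimensionality of $\cS_{\sf T}$ and a compactness argument on the scaling constants must be fed the operator system property, which is what prevents $c_k$ from stalling above $1$. A secondary technical point, needed only for the duality route to the lifting property, is to confirm that the polar duality relating $\cW({\sf T})$ to $\cS_{\sf T}^{\,d}$ is continuous for the Hausdorff metric on the normalized class, which again rests on the uniform inner and outer balls produced in the first step.
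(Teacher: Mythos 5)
Your reduction of Hausdorff convergence to the scaling constants via uniform inner and outer balls is exactly the paper's Propositions \ref{prop:precise_dist} and \ref{prop:switch_dist}, and your use of polar/operator-system duality to swap the $k$-minimal and $k$-maximal statements (and to swap $1$-exactness with the lifting property) is also how the paper transfers one half of the theorem to the other. But the core of the theorem --- the forward implications, ``$1$-exact $\implies \beta_k \to 1$'' and ``lifting property $\implies \gamma_k \to 1$'' --- is asserted rather than proved. You write that $\cW({\sf T}) \subseteq c_k\,\Wmin{k}(\cW({\sf T}))$ with $c_k \to 1$ ``is the finite-dimensional description of $1$-exactness,'' but it is not: Pisier's operator-space characterization produces \emph{some} sequence of matricial approximants with cb-distortion tending to $1$, and says nothing about the \emph{canonical} approximants $\Wmin{k}(\cW({\sf T}))$. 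To bridge that gap you would need (i) a unitization step converting complete isomorphisms onto subspaces of $M_N$ into UCP maps realizing $\cW({\sf A}) \subseteq \cW({\sf T}) \subseteq (1+\varepsilon)\cW({\sf A})$ for a matrix tuple ${\sf A}$ (the paper flags this as nontrivial, citing \cite[Lemma 8.6]{Lupini-limits} and \cite[Lemma 4.3]{Gol-Lup}, and deliberately routes around it), and (ii) the observation that any ${\sf A} \in M_N^d$ with $\cW_N({\sf A}) \subseteq \cW_N({\sf T})$ automatically satisfies $\cW({\sf A}) \subseteq \Wmin{N}(\cW({\sf T}))$, so that goodness of \emph{some} matricial approximant forces goodness of the canonical one. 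Neither step appears in your write-up; your closing paragraph correctly diagnoses that ``this is exactly where\ldots the operator system property must be fed in,'' but a ``compactness argument on the scaling constants'' is not what does that job.

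For comparison, the paper's engine for the hard direction is an explicit Calkin-algebra argument (Theorem \ref{thm:lifting_gamma}, $(2)\implies(3)$): assuming $\Wmax{k}$ does not converge, one selects points ${\sf M}^{(k)} \in \cW({\sf T}^{k\text{-max}})$ uniformly far from $\cW({\sf T})$, forms $\bigoplus_n {\sf M}^{(n)}$, observes that its image in the Calkin algebra is a UCP image of ${\sf T}$, approximately lifts, and compresses back to contradict the separation. The $1$-exactness direction is then obtained by Kavruk's duality (\cite[Theorem 6.6]{Ka-nuc}) together with (\ref{eq:polar_dual_min_max}). Your proposal contains no substitute for this step. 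Two smaller inaccuracies: $\mathrm{OMIN}_k(\cS_{\sf T})$ is generally \emph{not} a subsystem of a matrix algebra (it embeds in $\bigoplus_k C(X_k; M_k)$, as in Remark \ref{thm:main} of the paper's section 3; passing to an honest matrix tuple requires approximating $\cW_k({\sf T})$ by finitely many points), so even your ``immediate'' reverse implication needs either that extra approximation or Kavruk's Lemmas 9.8 and 9.10; and the scaling constant $c_k$ is an upper bound for, not exactly equal to, the cb Banach--Mazur distance to the $k$-minimal or $k$-maximal structure.
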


We find that to prove this theorem, it is useful to consider both Hausdorff distances and scaling properties (but note that our notion of Hausdorff approximation is distinct from that of \cite{Ge-random}, as we require convergence to be uniform across all the levels). Moreover, in doing so, we relate these approximations to free spectrahedra and their polar duals. In section \ref{sec:review}, we briefly review the $k$-minimal/$k$-maximal operator systems and matrix convex sets, and we set the notation we will use for the rest of the manuscript. Section \ref{sec:scale_prop} then develops the proof of Theorem \ref{thm:main} and gives extensions and examples. Finally, section \ref{sec:products} discusses how the scales considered in previous sections behave under products of matrix convex sets (equivalently, direct sums of operator systems).

\section{k-minimality and k-maximality}\label{sec:review}

Given an operator system $\cl S$ and a natural number $k$, Xhabli \cite{Xh-phd, Xh-JFA} introduced two new operator systems on the ordered vector space $\cl S$, denoted $\text{OMIN}_k(\cl S)$ and $\text{OMAX}_k(\cl S)$. To specify these operator systems, we need to describe the positive cones at each matrix level.

The positive cone $M_n(\text{OMIN}_k(\cl S))^+$ is denoted $C_n^{k\text{-min}}$ and is defined by
\[ C_n^{k\text{-min}} = \{ (x_{i,j} ) \in M_n(\cl S):  (\phi(x_{i,j})) \ge 0 \text{ for every UCP } \phi: \cl S \to M_k \}.\]
This operator system is designed for the following universal property: if $\cl T$ is an operator system, then 
\begin{equation} \psi: \cl T \to \cl S \hspace{.05 in} k\text{-positive} \hspace{.05 in} \iff \hspace{.05 in}  \psi: \cl T \to \text{OMIN}_k(\cl S) \text{ completely positive}.\end{equation}

On the other hand, the positive cone $M_n(\text{OMAX}_k(\cl S))^+$ is denoted $C_n^{k\text{-max}}$ and is defined in two stages. First, one sets
\[ D_n^{k\text{-max}} = \{ A^*DA : \exists m, A \in M_{mk, n},  D= \text{diag}( D_1, \ldots, D_m), \text{ with } D_i \in M_k(\cl S)^+, \forall i \}. \]
These sets are matrix convex, but they do not necessarily satisfy an Archimedean axiom. The sets $C_n^{k\text{-max}}$ are then obtained through Archimedeanization. That is,
\[ C_n^{k\text{-max}} = \{ (x_{i,j}) \in M_n(\cl S) : (x_{i,j}) + \varepsilon I_n \in D_n^{k\text{-max}}, \forall \varepsilon > 0 \},\]
where $I_n$ denotes the diagonal matrix whose entries are all equal to the identity of $\cl S$.

The corresponding objects in finite-dimensional closed matrix convex sets are found in \cite[Definition 3.1]{Kriel}, as $\text{kz}_k$ and $\text{pz}_k$. In this manuscript, we will only consider the bounded case, and we caution the reader that the presentation given here critically uses this assumption. We also use notation consistent with that of \cite{DDSS}.

 If $\mathcal{C} \subseteq \mathbb{M}^d$ is closed, bounded, and matrix convex, then we set
\[ \Wmin{k}(\mathcal{C}) = \text{kz}_k(\mathcal{C}) = \bigcap\{ \mathcal{L}:   \mathcal{L} \text{ is matrix convex with } \mathcal{L}_k = \mathcal{C}_k\}. \]
Certainly $\Wmin{k}(\mathcal{C}) \subseteq \mathcal{C}$ remains bounded, and closedness follows from \cite[Lemma 1.14]{Kriel}. Specifically, $\Wmin{k}(\mathcal{C})$ consists of matrix convex combinations $Y = \sum V_i^* X^{(i)} V_i$ of points in $\mathcal{C}_k$, and any such combination living in level $n$ may be written with a fixed number of summands. Then, since $\mathcal{C}_k$ is compact, any sequence of such combinations in level $n$ admits limit points in each of the terms. This guarantees that a limit point of $\Wmin{k}(\mathcal{C})$ remains in $\Wmin{k}(\mathcal{C})$.

Similarly, under the same assumptions, we set 
\[ \Wmax{k}(\mathcal{C}) = \text{pz}_k(\mathcal{C}) = \{ X \in \mathbb{M}^d: \cW_k(X) \subseteq \mathcal{C}_k\}. \]
A matrix convex set is bounded if and only if its first level (or equivalently, any particular level $k$) is bounded, so $\Wmax{k}(\mathcal{C})$ is bounded. Similarly, $\Wmax{k}(\mathcal{C})$ is closed because both $\mathcal{C}_k$ and the matrix state space are compact.

The above operations generalize $\mathcal{W}^{\text{min}}$ and $\mathcal{W}^{\text{max}}$ of \cite{DDSS}, up to a slight change in notation. That is, $\mathcal{W}^{\text{min}}$ and $\mathcal{W}^{\text{max}}$ receive closed convex subsets of Euclidean space as inputs, whereas the above operations receive matrix convex sets. However, $\Wmin{k}$ and $\Wmax{k}$ only care about the $k$th level of the input set, so we use the following convention. If necessary, $\Wmin{k}$ and $\Wmax{k}$ may also receive $C^*$-convex subsets of $M_k^d$. In this case, the output is identical to the result of applying $\Wmin{k}$ or $\Wmax{k}$ to any matrix convex set with the prescribed level $k$:
\begin{equation}\label{eq:k_or_full} \Wmin{k}(\mathcal{C}_k) = \Wmin{k}(\mathcal{C}), \hspace{0.45 in} \Wmax{k}(\mathcal{C}_k) = \Wmax{k}(\mathcal{C}).  \end{equation}
Once this convention is applied, it follows that $\mathcal{W}^{\text{min}}$ and $\mathcal{W}^{\text{max}}$ do indeed correspond to $\Wmin{1}$ and $\Wmax{1}$.

Recall that the polar dual of a closed and bounded matrix convex set $\mathcal{C}$, denoted $\mathcal{C}^\circ$, is the collection of matrix tuples $A$ such that
\[ \text{Re}\left( \sum\limits_{k=1}^d A_k \otimes X_k \right) \leq I \]
for every $X \in \mathcal{C}$. One may similarly define a polar dual for matrix convex sets of self-adjoints \cite[\S 3]{DDSS}. It is well-known that the polar dual of $\mathcal{C}$ is itself closed and bounded if and only if $0$ is in the interior of $\mathcal{C}_1$ (see \cite[Remark 2.5]{Kriel}, for example). In this case, the polar dual reverses the roles of $\Wmin{k}$ and $\Wmax{k}$ as a consequence of \cite[Lemma 3.7]{Kriel}. That is, if $0$ is in the interior of a closed and bounded matrix convex set $\mathcal{C}$, then
\begin{equation}\label{eq:polar_dual_min_max} \Wmax{k}(\mathcal{C})^{\circ} = \Wmin{k}(\mathcal{C}^\circ), \hspace{.45 in} \Wmin{k}(\mathcal{C})^\circ = \Wmax{k}(\mathcal{C}^\circ). \end{equation}
Note that the above presentation appears to be simpler than \cite[Lemma 3.7]{Kriel} only because we have restricted to the case where both $\mathcal{C}$ and $\mathcal{C}^{\circ}$ are closed and bounded. 

In the same vein, the earlier result \cite[Theorem 9.9]{Ka-nuc} shows that operator system duality switches the roles of $\text{OMIN}_k$ and $\text{OMAX}_k$. This is no coincidence, as the polar dual for a matrix convex set $\cW({\sf T})$ corresponds to a particular choice of basis and Archimedean order unit for the dual operator system of $\mathcal{S}_{\sf T}$. To see this, let ${\sf T} =(T_1, \ldots,T_d) \in B(\cl H)^d$ and let $\cl S_{\sf T} = \text{span} \{ I, T_1, \ldots, T_d, T_1^*, \ldots ,T_d^* \}$. For convenience, set $T_{d+k} = T_k^*$ for $1 \le k \le d$ and $I= T_0$.
If  $\dim(\cl S_T) = 2d+1$, then we have dual functionals, $\delta_i,  0 \le i \le 2d$ where $\delta_i(T_j) = \delta_{i,j}$. The dual of the operator system $\cl S_{\sf T},$ which we denote $\cl S_{\sf T}^\prime$, is a matrix ordered space, but since we are in the finite dimensional case, it is also an operator system as soon as we pick an appropriate matrix order unit.   Also, recall that given $f \in \cl S_T^\prime$, the *-operation is defined by
\[ f^*(X) = \overline{f(X^*)}.\] It is easily checked that $\delta_0^* = \delta_0$ and that for $1 \le i \le d$, $\delta_i^* = \delta_{d+i}$.   Thus,   $\cl S_{\sf T}^\prime = \text{span} \{ \delta_0, \delta_1, \ldots, \delta_d, \delta_1^*,\ldots, \delta_d^* \}$. The following is essentially a restatement of \cite[Corollary 6.3.12]{Adam-thesis}.

\begin{prop}\label{prop:polar_opsys_dual} Let ${\sf T} =(T_1, \ldots, T_d) \in B(\cl H)^d$ and assume that $0$ is in the interior of $\mathcal{W}_1({\sf T})$.  Then
\begin{itemize}
\item $\dim(\cl S_{\sf T}) = 2d+1$,
\item $\delta_0$ is an Archimedean order unit for the matrix ordered space $\cl S_T^\prime$, so that $(\cl S_{\sf T}^\prime, \delta_0)$ is an abstract operator system, and
\item when we regard ${\sf R}=( \delta_1, \ldots, \delta_d)$ as a $d$-tuple in this operator system, we have that
\[ \cl W({\sf R}) = \cl W({\sf T})^\circ.\]
\end{itemize}
\end{prop}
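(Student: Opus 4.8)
The plan is to verify the three assertions in order, treating the matrix-range identity as the main event and leaning on finite-dimensional operator-system duality for the middle step.

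For the dimension count, I would argue that $0 \in \operatorname{int} \mathcal{W}_1({\sf T})$ forces $\{I, T_1, \ldots, T_d, T_1^*, \ldots, T_d^*\}$ to be linearly independent. Any dependence $\sum_{j=0}^{2d} c_j T_j = 0$ (with $T_0 = I$, $T_{d+i} = T_i^*$) is carried by every state $\phi$ to the complex-valued real-affine identity $c_0 + \sum_i c_i \phi(T_i) + \sum_i c_{d+i}\overline{\phi(T_i)} = 0$, valid at every point $z = (\phi(T_i))$ of $\mathcal{W}_1({\sf T})$. Since this set has nonempty interior in $\mathbb{C}^d \cong \mathbb{R}^{2d}$, the affine function vanishes identically, and inspecting its first-order part forces each $c_j = 0$. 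Hence $\dim(\cl S_{\sf T}) = 2d+1$ and the $\delta_j$ form a genuine dual basis.

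For the order unit, I would first note that $0 \in \operatorname{int}\mathcal{W}_1({\sf T})$ yields a state $\phi_0$ on $\cl S_{\sf T}$ with $\phi_0(T_i)=0$ for all $i$; reading off coefficients in the basis shows $\phi_0 = \delta_0$, so $\delta_0$ is a state. I would then upgrade this to faithfulness: if $a \geq 0$ with $\delta_0(a)=0$, then the $I$-coefficient of $a$ vanishes, and positivity of $a$ against all states gives $\operatorname{Re}\!\left(\sum_i c_i \psi(T_i)\right) \geq 0$ for every point $\psi(T)$ of $\mathcal{W}_1({\sf T})$; a real-linear functional nonnegative on a neighborhood of $0$ and vanishing at $0$ is identically zero, forcing $a = 0$. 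Finally I would invoke the standard fact that for a finite-dimensional operator system the faithful states are exactly the Archimedean matrix order units of the matrix-ordered dual, which makes $(\cl S_{\sf T}^\prime,\delta_0)$ an operator system.

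The heart is the identity $\mathcal{W}({\sf R}) = \mathcal{W}({\sf T})^\circ$. A UCP map $\psi\colon \cl S_{\sf T}^\prime \to M_n$ is determined by $A_i := \psi(\delta_i)$, together with $\psi(\delta_0)=I_n$ (unitality) and $\psi(\delta_i^*)=A_i^*$ ($*$-preservation). Using the canonical identification $\cl S_{\sf T}^{\prime\prime} \cong \cl S_{\sf T}$, whose order unit corresponds to $I$ by the faithfulness just established, complete positivity of $\psi$ is equivalent to positivity of the associated element
\[ \widehat{\psi} \;=\; I_n \otimes I + \sum_{i=1}^d \bigl(A_i \otimes T_i + A_i^* \otimes T_i^*\bigr) \;\in\; M_n(\cl S_{\sf T}). \]
I would then test membership in $M_n(\cl S_{\sf T})^+$ against all UCP maps $\phi\colon \cl S_{\sf T} \to M_m$: since compressions to finite-dimensional subspaces are UCP and detect operator positivity, $\widehat{\psi}\geq 0$ iff $(\mathrm{id}_n\otimes\phi)(\widehat{\psi}) = I_{nm} + 2\operatorname{Re}\bigl(\sum_i A_i \otimes \phi(T_i)\bigr)\geq 0$ for every such $\phi$, equivalently for every $X \in \mathcal{W}({\sf T})$. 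This is precisely the defining inequality of the polar dual at level $n$, giving $\mathcal{W}_n({\sf R}) = \mathcal{W}({\sf T})^\circ_n$.

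I expect the main obstacle to be this middle reduction: correctly invoking finite-dimensional duality so that UCP maps out of $\cl S_{\sf T}^\prime$ match positive elements of $M_n(\cl S_{\sf T})$ with the unit landing on $I$, and then matching the resulting inequality to the precise normalization of the polar dual in \cite{DDSS}, where the constant produced by expanding in the self-adjoint pair $T_i, T_i^*$ must be tracked carefully. The linear-independence and faithfulness steps are comparatively routine once the interior hypothesis is exploited.
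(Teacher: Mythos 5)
The paper does not actually prove this proposition; it is quoted as ``essentially a restatement of \cite[Corollary 6.3.12]{Adam-thesis}'', so your argument has to stand on its own. Its architecture is the natural one, and the first two bullets are complete: the interior hypothesis does force linear independence of $\{I, T_1, \ldots, T_d, T_1^*, \ldots, T_d^*\}$ and faithfulness of the state $\delta_0$, and passing from a faithful state to an Archimedean matrix order unit on the finite-dimensional dual is the standard Choi--Effros fact. Your reduction in the third bullet is also right: a unital map $\psi$ on $\mathcal{S}_{\sf T}^\prime$ is UCP exactly when $\widehat{\psi}=\sum_k \psi(\delta_k)\otimes T_k$ lies in $M_n(\mathcal{S}_{\sf T})^+$, and positivity of this concrete operator is detected by the compressions $\mathrm{id}_n\otimes \phi$ with $\phi$ UCP into a matrix algebra, which yields
\[ \mathcal{W}_n({\sf R}) = \Bigl\{ {\sf A}\in M_n^d : I + 2\,\mathrm{Re}\Bigl(\sum_{i=1}^d A_i\otimes X_i\Bigr)\geq 0 \ \text{ for all } X\in\mathcal{W}({\sf T})\Bigr\}. \]

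The gap sits exactly at the step you flagged as the main obstacle and then did not carry out: the displayed condition is \emph{not} the polar-dual inequality as defined in Section 2 of the paper. There, ${\sf A}\in\mathcal{W}({\sf T})^\circ$ means $\mathrm{Re}(\sum A_i\otimes X_i)\leq I$, whereas your condition is that inequality for the tuple $-2{\sf A}$; so as written you have proved $\mathcal{W}({\sf R})=-\tfrac12\,\mathcal{W}({\sf T})^\circ$, which for a bounded set with $0$ in its interior is a genuinely different set. A concrete check: for $d=1$ and $T=u$ a universal unitary, the states of $(\mathcal{S}_u^\prime,\delta_0)$ are the evaluations at $1+cu+\bar{c}u^*\geq 0$, i.e.\ $|c|\leq\tfrac12$, so $\mathcal{W}_1(\delta_1)=\tfrac12\overline{\mathbb{D}}$, while $\{a:\mathrm{Re}(a\otimes X)\leq I \text{ for all contractions } X\}$ has first level $\overline{\mathbb{D}}$. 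To close the argument you must either renormalize (take ${\sf R}=(-2\delta_1,\ldots,-2\delta_d)$, equivalently pair against $-\tfrac12 T_i$) or verify that the polar-dual convention actually used in \cite{Adam-thesis} absorbs the factor $-2$; you cannot simply assert that the two inequalities coincide. The same factor of $2$ surfaces later in the paper, where the polar dual of $\mathcal{W}({\sf U}^{[2]})$ is identified with $\mathcal{W}(2{\sf T})$ rather than with the matrix range of the dual basis itself, so your bookkeeping is in fact detecting a real normalization issue that the statement glosses over.
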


The above claim can be similarly adjusted to even-dimensional operator systems, or to operator systems with a basis of self-adjoints. In particular, from \cite[Corollary 6.3.10]{Adam-thesis}, every finite-dimensional operator system admits a basis of self-adjoints $1, P_1, \ldots, P_n$ such that $0$ is in the interior of $\mathcal{W}_1({\sf P}) \subseteq \mathbb{R}^n$. For even-dimensional operator systems, one can choose a basis $1, T_1, \ldots, T_d, T_1^*, \ldots, T_{d-1}^*$ where $T_d = T_d^*$, but the other $T_i$ are non self-adjoint. We can always assume that we are in a similar situation as in the proposition with a slight abuse of notation, e.g. by viewing the interior of a set in $\mathbb{R}^n$ or $\mathbb{C}^{d-1} \oplus \mathbb{R}$ instead of $\mathbb{C}^d$. Similarly, one may consider the self-adjoint polar dual (see \cite[\S 3]{DDSS}) in place of the polar dual, when needed, and we leave these details to the reader. We will refer to all such changes of a spanning set or basis as {\bf recoordinatizing} the operator system. Except for the requirement that $0$ is an interior point, when appropriate, our results are generally independent of the coordinate system.

We will consider the approximation of $\mathcal{C} = \cW({\sf T})$ by $\Wmin{k}(\mathcal{C})$ and $\Wmax{k}(\mathcal{C})$ in both a Hausdorff distance sense and a scaling sense. Given two matrix tuples ${\sf A}, {\sf B} \in M_n^d$ of the same dimension, we set
\[ ||{\sf A} - {\sf B}|| = \sum\limits_{j=1}^d ||A_i - B_i||, \]
and similarly for tuples with elements in any $C^*$-algebra. We then define the Hausdorff distance for subsets of $M_n^d$ in the usual way, which we extend to matrix convex sets by setting $\text{dist}(\mathcal{C}, \mathcal{D}) := \sup_{n} \text{dist}(\mathcal{C}_n, \mathcal{D}_n)$. Note in particular that convergence in this Hausdorff distance requires a uniform approximation across all levels, in contrast with the Hausdorff approximation used in \cite{Ge-random}. Similar ideas give rise to the following constants.

\begin{defn}
Given a closed and bounded matrix convex set $\mathcal{C} \subseteq \mathbb{M}^d$, let
\begin{itemize}
\item $\alpha_k(\mathcal{C}) = \inf \{ r \geq 1:  \Wmax{k}(\mathcal{C}) \subseteq r \cdot  \Wmin{k}(\mathcal{C})\}$,
\item $\beta_k(\mathcal{C}) = \inf \{ r \geq 1:  \mathcal{C}  \subseteq r \cdot  \Wmin{k}(\mathcal{C}) \}$, and
\item $\gamma_k(\mathcal{C}) = \inf \{ r \geq 1: \Wmax{k}(\mathcal{C}) \subseteq r \cdot \mathcal{C} \},$
\end{itemize}
with the convention that $\inf(\varnothing) = +\infty$. If $\mathcal{C}$ is equal to $\cW({\sf T})$, we also denote these constants as $\alpha_k({\sf T})$, $\beta_k({\sf T})$, and $\gamma_k({\sf T})$, respectively. 
\end{defn}

Note that the constant $\theta$ of \cite[(3.7)]{Pas-Sh-So} is just $\alpha_1$ with a slight change in notation for the input set. Moreover, it is immediate that 
\begin{equation}\label{eq:trivial_bounds} \max\{ \beta_k(\mathcal{C}), \gamma_k(\mathcal{C})\} \leq \alpha_k(\mathcal{C}) \le \beta_k(\mathcal{C}) \gamma_k(\mathcal{C}) \end{equation}
and that each parameter is a non-increasing function of $k$. Thus, we may also set
\begin{equation} \alpha(\mathcal{C}) = \lim_{k \to +\infty} \alpha_k(\mathcal{C}), \hspace{.3 in} \beta(\mathcal{C}) = \lim_{k \to +\infty} \beta_k(\mathcal{C}), \hspace{.3 in} \gamma(\mathcal{C}) = \lim_{k \to +\infty} \gamma_k(\mathcal{C}), \end{equation}
with a similar notational convention as above if $\mathcal{C} = \cW({\sf T})$. In this case, we also let ${\sf T}^{k\text{-min}}$ and ${\sf T}^{k\text{-max}}$ denote a (non-unique) operator tuple with 
\begin{equation} \Wmin{k}(\cW({\sf T})) = \cW({\sf T}^{k\text{-min}}), \hspace{.3 in} \Wmax{k}(\cW({\sf T})) = \cW({\sf T}^{k\text{-max}}). \end{equation}
That is,
\begin{equation}\label{eq:equality_at_n} 1 \leq n \leq k \hspace{.2 in} \implies \hspace{.2 in} \cW_n({\sf T}^{k\text{-min}}) = \cW_n({\sf T}) = \cW_n({\sf T}^{k\text{-max}}), \end{equation}
and $\cW({\sf T}^{k\text{-min}})$ and $\cW({\sf T}^{k\text{-max}})$ are extremal matrix convex sets with this property. Equivalently, ${\sf T}^{k\text{-min}}$ spans $\text{OMIN}_k({\cl S}_{\sf T})$ and ${\sf T}^{k\text{-max}}$ spans $\text{OMAX}_k({\cl S}_{\sf T})$.

The constants above are not solely determined by the operator system $\mathcal{S}_{\sf T}$, in that they also encode information about the choice of coordinates. For the most part, this lack of uniqueness will not be a problem, but we will generally consider the assumption that $0$ is in the interior of $\cW_1({\sf T})$ in order to show that the constants are finite.

\begin{prop}\label{prop:scale_position}
If ${\sf T} \in B(\cl H)^d$ and $0$ is in the interior of $\cW_1({\sf T})$ (viewed in real Euclidean space if needed), then there is a uniform bound on $\alpha_k({\sf T})$, $\beta_k({\sf T})$, and $\gamma_k({\sf T})$. 
\end{prop}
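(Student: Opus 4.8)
The plan is to reduce everything to the single constant $\alpha_1({\sf T})$ and then to a sandwiching argument against a Euclidean ball. Recall from \eqref{eq:trivial_bounds} that $\beta_k(\cl C), \gamma_k(\cl C) \le \alpha_k(\cl C)$ for every $k$, and that each of the three families is non-increasing in $k$; hence $\max\{\alpha_k({\sf T}), \beta_k({\sf T}), \gamma_k({\sf T})\} \le \alpha_1({\sf T})$ for all $k \ge 1$. So it suffices to prove that $\alpha_1({\sf T}) < +\infty$, i.e.\ that $\Wmax{1}(\cW({\sf T}))$ is contained in some finite dilate of $\Wmin{1}(\cW({\sf T}))$. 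After recoordinatizing if necessary, we may assume $K := \cW_1({\sf T})$ is a compact convex subset of a real Euclidean space with $0$ in its interior, and by \eqref{eq:k_or_full} the two sets in question are $\Wmax{1}(K)$ and $\Wmin{1}(K)$.

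Next I would record the two elementary properties of $\Wmin{1}$ and $\Wmax{1}$ that drive the estimate. Both operations are monotone under inclusion of their first-level input --- if $K \subseteq L$ then $\Wmin{1}(K) \subseteq \Wmin{1}(L)$ and $\Wmax{1}(K) \subseteq \Wmax{1}(L)$ --- and both are positively homogeneous, $\Wmin{1}(cK) = c\,\Wmin{1}(K)$ and $\Wmax{1}(cK) = c\,\Wmax{1}(K)$ for $c > 0$. Monotonicity is immediate from the descriptions of $\Wmin{1}(K)$ as the set of matrix convex combinations of points of $K$ and of $\Wmax{1}(K)$ as $\{X : \cW_1(X) \subseteq K\}$, while homogeneity follows since $\cW_1(cX) = c\,\cW_1(X)$.

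Since $K$ is compact with $0$ in its interior, I would fix radii $0 < r_1 \le r_2 < +\infty$ with $r_1 \overline{B} \subseteq K \subseteq r_2 \overline{B}$, where $\overline{B}$ denotes the closed Euclidean unit ball of the ambient space. Combining monotonicity and homogeneity gives
\[ \Wmax{1}(K) \subseteq \Wmax{1}(r_2 \overline{B}) = r_2 \, \Wmax{1}(\overline{B}), \qquad r_1\,\Wmin{1}(\overline{B}) = \Wmin{1}(r_1 \overline{B}) \subseteq \Wmin{1}(K). \]
At this point the whole problem has been localized to the ball, and the final ingredient is the known fact that the maximal matrix convex set over a Euclidean ball lies in a finite dilate of the minimal one: there is a finite constant $C$, depending only on the dimension of the ambient real space, with $\Wmax{1}(\overline{B}) \subseteq C\,\Wmin{1}(\overline{B})$ (see \cite{DDSS, Pas-Sh-So}). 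Chaining the three inclusions yields
\[ \Wmax{1}(K) \subseteq r_2\,\Wmax{1}(\overline{B}) \subseteq C r_2\,\Wmin{1}(\overline{B}) \subseteq \frac{C r_2}{r_1}\,\Wmin{1}(K), \]
so $\alpha_1({\sf T}) \le C r_2 / r_1 < +\infty$, which by the first paragraph is the desired uniform bound on all three families.

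I expect the genuine content to be the ball estimate $\Wmax{1}(\overline{B}) \subseteq C\,\Wmin{1}(\overline{B})$; everything else is bookkeeping with monotonicity, homogeneity, and the trivial bounds. One should also take a little care with the coordinate conventions flagged before the statement, since $\cW_1({\sf T})$ naturally sits in $\bb{C}^d \cong \bb{R}^{2d}$, or in $\bb{R}^n$ after passing to a self-adjoint basis; the sandwiching argument is insensitive to this choice provided one uses the ball of the correct real dimension, so the recoordinatizing remark is exactly what lets us assume $0$ is an interior point and invoke the ball estimate.
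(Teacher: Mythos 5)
Your proof is correct and follows essentially the same route as the paper's: reduce to bounding $\alpha_1({\sf T})$ via the trivial bounds \eqref{eq:trivial_bounds} and monotonicity in $k$, then sandwich $\cW_1({\sf T})$ between Euclidean balls and invoke the estimate $\Wmax{1}(\mathbb{B}^{n}) \subseteq C \, \Wmin{1}(\mathbb{B}^{n})$ from \cite[\S 7]{DDSS}. The only difference is that you spell out the monotonicity and positive homogeneity of $\Wmin{1}$ and $\Wmax{1}$, which the paper leaves implicit.
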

\begin{proof}
Given (\ref{eq:trivial_bounds}) and the fact that each expression is non-increasing in $k$, it suffices to bound $\alpha_1({\sf T})$. This bound is known as an immediate consequence of \cite[\S 7]{DDSS}, which gives that the real Euclidean ball $\mathbb{B}^{n}$ has $\Wmax{1}(\mathbb{B}^{n}) \subseteq n \, \Wmin{1}(\mathbb{B}^{n})$. Hence, if $\varepsilon \, \mathbb{B}^{n} \subseteq \cW_1({\sf T}) \subseteq M \, \mathbb{B}^{n}$, it also follows that $\Wmax{1}(\cW({\sf T})) \subseteq \frac{M}{n \varepsilon} \, \Wmin{1}(\cW({\sf T}))$, and therefore $\alpha_1({\sf T}) \leq \frac{M}{n \varepsilon}$. 
\end{proof}

Every finite-dimensional operator system admits a presentation as $\mathcal{S}_{\sf T}$ where $0$ is in the interior of $\cW_1({\sf T})$, so the assumption of Proposition \ref{prop:scale_position} is not particularly demanding. However, we are most interested in the case where the limiting constants are exactly $1$. This, in turn, gives estimates on the Hausdorff distance. The conversion of a scaled comparison
\[ a \, \mathcal{D} \subseteq \mathcal{C} \subseteq b \, \mathcal{D} \]
into a Hausdorff distance estimate is trivial given bounds on all elements of $\mathcal{C}$ and $\mathcal{D}$, and a reverse estimate follows with some control on the placement of zero. These arguments work for matrix convex sets in much the same way as for classical convex sets, modulo some slight complications from the existence of multiple matrix levels.

\begin{prop}\label{prop:precise_dist}
Let $\mathcal{C}$ and $\mathcal{D}$ be matrix convex sets over $\mathbb{R}^d$ (or over $\mathbb{C}^n$ where $d = 2n$, etc.). If $M > 0$ is a fixed, uniform bound on all elements of $\mathcal{C} \cup \mathcal{D}$, then it follows that for positive constants $a$ and $b$,
\[ a \, \mathcal{D} \subseteq \mathcal{C} \subseteq b \, \mathcal{D} \,\,\,\, \implies \,\,\,\,\, \emph{dist}(\mathcal{C}, \mathcal{D}) \leq M \, \max\{|a^{-1} - 1|, |b - 1|\}. \]
Similarly, if $\delta > 0$ is a fixed constant such that the Euclidean ball satisfies $\mathbb{B}_\delta \subseteq \mathcal{C}_1 \cap \mathcal{D}_1$, then it follows that for any $\varepsilon > 0$, 
\[ \emph{dist}(\mathcal{C}, \mathcal{D}) < \varepsilon \,\,\,\, \implies \,\,\,\,\, \cfrac{1}{a} \, \mathcal{D} \subseteq \mathcal{C} \subseteq a \, \mathcal{D}, \]
where $a = 1 + d \varepsilon/\delta$.
\end{prop}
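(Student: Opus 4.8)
The plan is to prove the two implications separately, since they correspond to the two sides of the equivalence between scaling comparisons and Hausdorff estimates, and only the second uses matrix convexity in an essential way.

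First I would dispatch the forward implication (scaling $\Rightarrow$ Hausdorff bound), which is purely metric and is handled level by level. Fix $n$ and estimate the two one-sided Hausdorff distances between $\mathcal{C}_n$ and $\mathcal{D}_n$. For a point $X \in \mathcal{C}_n$, the inclusion $\mathcal{C} \subseteq b\mathcal{D}$ lets me write $X = bY$ with $Y \in \mathcal{D}_n$; then $\|X - Y\| = |b-1|\,\|Y\| \le M|b-1|$, since $Y \in \mathcal{D}$ forces $\|Y\| \le M$. Dually, for $Y \in \mathcal{D}_n$ the inclusion $a\mathcal{D} \subseteq \mathcal{C}$ gives $X := aY \in \mathcal{C}_n$, and writing $Y = a^{-1}X$ yields $\|Y - X\| = |a^{-1}-1|\,\|X\| \le M|a^{-1}-1|$ because $X \in \mathcal{C}$. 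Taking the maximum of the two one-sided bounds and then the supremum over $n$ gives $\mathrm{dist}(\mathcal{C},\mathcal{D}) \le M\max\{|a^{-1}-1|,|b-1|\}$. The only point requiring care is that, in each direction, I bound the norm of the element known to lie in the set whose uniform bound $M$ I invoke; this is what produces $|a^{-1}-1|$ rather than $|a-1|$.

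For the reverse implication the essential point is to upgrade the level-$1$ ball hypothesis to a genuine norm-ball inside $\mathcal{D}_n$ (and $\mathcal{C}_n$) at \emph{every} level $n$, and this is the step I expect to be the main obstacle: a matrix convex set containing a ball at level $1$ can a priori be very thin in non-commuting directions at higher levels, so the ball does not propagate for free. The plan is to build it coordinatewise. For each coordinate $j$, the segment $\{t e_j : |t| \le \delta\}$ lies in $\mathbb{B}_\delta \subseteq \mathcal{D}_1$, and a spectral decomposition of a self-adjoint $A$ with $\|A\| \le \delta$ exhibits the single-coordinate tuple $(0,\dots,A,\dots,0)$ as a matrix convex combination of these scalar points, whence every such tuple lies in $\mathcal{D}_n$. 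Averaging the $d$ coordinate contributions with equal weights $1/d$ (a matrix convex combination with scalar coefficients $d^{-1/2}I_n$) then shows that every $F \in M_n^d$ with $\sum_j \|F_j\| \le \delta/d$ lies in $\mathcal{D}_n$, and symmetrically in $\mathcal{C}_n$. In the complex case one first recoordinatizes to a self-adjoint spanning set, as in Section~\ref{sec:review}.

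With this uniform ball in hand, I would finish by a single convex-combination argument carried out identically at every level. Given $X \in \mathcal{C}_n$, the hypothesis $\mathrm{dist}(\mathcal{C},\mathcal{D}) < \varepsilon$ supplies $Y \in \mathcal{D}_n$ with $\|X - Y\| < \varepsilon$; writing $a^{-1}X = a^{-1}Y + (1-a^{-1})Z$ with $Z = (a-1)^{-1}(X - Y)$, I only need $Z \in \mathcal{D}_n$, which holds once $\|Z\| = \|X-Y\|/(a-1) \le \delta/d$, i.e. once $a - 1 \ge d\varepsilon/\delta$. Thus $a = 1 + d\varepsilon/\delta$ yields $X \in a\mathcal{D}_n$, hence $\mathcal{C} \subseteq a\mathcal{D}$, and the same computation with the roles of $\mathcal{C}$ and $\mathcal{D}$ reversed gives $\mathcal{D} \subseteq a\mathcal{C}$, equivalently $a^{-1}\mathcal{D} \subseteq \mathcal{C}$. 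Because the ball radius $\delta/d$ and the bound on $\|X-Y\|$ are uniform in $n$, both inclusions hold at all levels simultaneously, which is precisely what the level-independent Hausdorff distance $\mathrm{dist} = \sup_n \mathrm{dist}(\cdot_n,\cdot_n)$ requires.
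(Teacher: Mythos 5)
Your proof is correct and follows essentially the same route as the paper's: the forward implication is the same level-by-level metric estimate (which the paper leaves to the reader), and the reverse implication uses the identical decomposition of a point of $\mathcal{C}_n$ as a nearby point of $\mathcal{D}_n$ plus a small error term that is absorbed into $\mathcal{D}_n$ via a uniform ball of radius $\delta/d$ at every matrix level, yielding the same scaled convex combination and the same constant $a = 1 + d\varepsilon/\delta$. The only divergence is that where the paper justifies the propagation of the level-one ball to all levels by citing the dilation results of \cite[\S 7]{DDSS} (in effect $\Wmax{1}(\mathbb{B}_{\delta/d}) \subseteq \Wmin{1}(\mathbb{B}_{\delta})$), you prove the special case you need directly, by spectral decomposition of each self-adjoint coordinate followed by a $1/d$-weighted matrix convex average, which is a correct self-contained substitute producing the same factor of $d$.
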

\begin{proof}
The first claim is left to the reader, as it does not use matrix convexity. Suppose $\delta, \varepsilon > 0$ are such that $\mathbb{B}_\delta \subseteq \mathcal{C}_1 \cap \mathcal{D}_1$ and $\text{dist}(\mathcal{C}, \mathcal{D}) < \varepsilon$. Given an arbitrary ${\sf C} \in \mathcal{C}$, we may approximate ${\sf C}$ within $\varepsilon$ by an element of $\mathcal{D}$, equivalently
\[ {\sf C} = {\sf D} + \frac{d \varepsilon}{\delta} \, {\sf E}, \,\,\,\,\,\,\, {\sf D} \in \mathcal{D}, \,\, ||{\sf E}|| < \frac{\delta}{d} \,. \]
Now, from $||{\sf E}|| < \delta/d$, we may certainly conclude that $\mathcal{W}_1({\sf E})$ is contained in $\mathbb{B}_{\delta/d}$. Applying the dilation results of \cite[\S 7]{DDSS} in $d$ self-adjoint variables shows that ${\sf E}$ is in the minimal matrix convex set over $\mathbb{B}_\delta$. In particular, ${\sf E} \in \mathcal{D}$. Finally, we may write ${\sf C}$ as
\[ {\sf C} = \left( 1 + d \, \varepsilon/\delta \right) \, \left( \cfrac{1}{1+ d\varepsilon / \delta} \, {\sf D} + \cfrac{d \varepsilon / \delta}{1 + d \varepsilon / \delta} \, {\sf E} \right), \]
so that ${\sf C}$ is $1 + \frac{d \, \varepsilon}{\delta}$ times a convex combination of ${\sf D}, {\sf E} \in \mathcal{D}$. That is, we have that ${\sf C} \in (1 + \frac{d \, \varepsilon}{\delta}) \, \mathcal{D}$ and hence $\mathcal{C} \subseteq (1 + \frac{d \, \varepsilon}{\delta}) \, \mathcal{D}$. We may also reverse the roles of $\mathcal{C}$ and $\mathcal{D}$ to complete the argument.
\end{proof}

\begin{prop}\label{prop:switch_dist}
Let ${\sf T} \in B(\cl H)^d$. Then $\beta({\sf T}) = 1$ implies that $\cW({\sf T}^{k\emph{-min}})$ converges to $\cW({\sf T})$ in the Hausdorff distance, and the converse holds if $0$ is in the interior of $\cW_1({\sf T})$. Similarly, $\gamma({\sf T}) = 1$ implies that $\cW({\sf T}^{k\emph{-max}})$ converges to $\cW({\sf T})$ in the Hausdorff distance, and the converse holds if $0$ is in the interior of $\cW_1({\sf T})$. 
\end{prop}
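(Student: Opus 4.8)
The plan is to read off both equivalences from Proposition \ref{prop:precise_dist}, which is precisely the dictionary between scaled inclusions and Hausdorff estimates. The $\beta$ and $\gamma$ statements are perfectly parallel, so I would run them side by side, anchoring everything on the sandwich $\cW({\sf T}^{k\text{-min}}) = \Wmin{k}(\cW({\sf T})) \subseteq \cW({\sf T}) \subseteq \Wmax{k}(\cW({\sf T})) = \cW({\sf T}^{k\text{-max}})$, together with the definitions of $\beta_k({\sf T})$ and $\gamma_k({\sf T})$. Throughout I would use that $\cW({\sf T})$ is norm-bounded (its entries are UCP images of the bounded tuple $\sf T$), so a uniform bound $M$ on all levels exists.

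For the forward direction of the $\beta$ statement, I would unwind $\beta({\sf T}) = 1$: since $\beta_k({\sf T})$ is non-increasing with limit $1$, for all large $k$ the defining set is nonempty and I may choose $r_k$ just above $\beta_k({\sf T})$ with $\cW({\sf T}) \subseteq r_k \, \cW({\sf T}^{k\text{-min}})$ and $r_k \to 1$. Combined with $\cW({\sf T}^{k\text{-min}}) \subseteq \cW({\sf T})$, this reads $\tfrac{1}{r_k}\cW({\sf T}) \subseteq \cW({\sf T}^{k\text{-min}}) \subseteq \cW({\sf T})$, which is exactly the hypothesis $a\,\mathcal{D} \subseteq \mathcal{C} \subseteq b\,\mathcal{D}$ of the first part of Proposition \ref{prop:precise_dist} with $\mathcal{C} = \cW({\sf T}^{k\text{-min}})$, $\mathcal{D} = \cW({\sf T})$, $a = 1/r_k$, $b = 1$. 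The Proposition then gives $\text{dist} \le M(r_k - 1) \to 0$. The $\gamma$ case is symmetric: $\gamma({\sf T}) = 1$ yields $\cW({\sf T}) \subseteq \cW({\sf T}^{k\text{-max}}) \subseteq s_k\,\cW({\sf T})$ with $s_k \to 1$, so $a = 1$, $b = s_k$, and one need only note that $\cW({\sf T}^{k\text{-max}})$ is uniformly bounded (by $s_k M \le 2M$ for large $k$) before invoking the Proposition again.

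For the converses I would use the interior hypothesis. Recoordinatizing if necessary so that $0$ lies in the interior of $\cW_1({\sf T})$ in real Euclidean space, fix $\delta > 0$ with $\mathbb{B}_\delta \subseteq \cW_1({\sf T})$. The key observation is that the $k$-minimal and $k$-maximal operations do not disturb the first level: by \eqref{eq:equality_at_n}, $\cW_1({\sf T}^{k\text{-min}}) = \cW_1({\sf T}) = \cW_1({\sf T}^{k\text{-max}})$ for every $k \ge 1$, so the \emph{same} ball $\mathbb{B}_\delta$ sits inside level $1$ of all approximants at once. Hence if $\cW({\sf T}^{k\text{-min}}) \to \cW({\sf T})$, then for every $\varepsilon > 0$ and all large $k$ the second part of Proposition \ref{prop:precise_dist}, applied with this common ball, gives $\cW({\sf T}) \subseteq (1 + d\varepsilon/\delta)\,\cW({\sf T}^{k\text{-min}})$, so $\beta_k({\sf T}) \le 1 + d\varepsilon/\delta$; sending $k \to \infty$ and then $\varepsilon \to 0$ forces $\beta({\sf T}) = 1$. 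The $\gamma$ converse is identical, reading off $\cW({\sf T}^{k\text{-max}}) \subseteq (1 + d\varepsilon/\delta)\,\cW({\sf T})$ to bound $\gamma_k({\sf T})$.

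The one genuinely delicate point — and the one I would state carefully — is the uniformity in the converse: a single $\delta$ must serve for all $k$, so that the estimates $1 + d\varepsilon/\delta$ actually tend to $1$ rather than drifting as $k$ grows. This is exactly what \eqref{eq:equality_at_n} provides, and it is precisely why the interior assumption is both necessary and sufficient here; without an interior ball common to every first level, the reverse passage from Hausdorff distance back to a scaling inclusion could degenerate. By contrast the forward direction presents no obstacle beyond verifying the uniform norm bound $M$, which is automatic from boundedness of $\cW({\sf T})$.
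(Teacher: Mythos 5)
Your proposal is correct and is essentially the paper's own argument: both directions are read off from Proposition \ref{prop:precise_dist}, using the uniform norm bound coming from $\|{\sf T}\|$ for the forward implication and the fact that $\cW_1({\sf T}^{k\text{-min}}) = \cW_1({\sf T}) = \cW_1({\sf T}^{k\text{-max}})$ for all $k$ (so a single $\delta$ works) for the converse. The paper states this in two sentences; you have merely spelled out the same bookkeeping.
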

\begin{proof}
This follows immediately from Proposition \ref{prop:precise_dist}. In particular, note that a bound on $||{\sf T}||$ places a uniform bound $M$ on members of all of the matrix convex sets $\mathcal{W}({\sf T}^{k\text{-min}})$, $\mathcal{W}({\sf T})$, and $\mathcal{W}({\sf T}^{k\text{-max}})$. Similarly, these sets all have the same first level, so if $0$ is in the interior of $\mathcal{W}_1({\sf T})$, we may find $\delta > 0$ as in Proposition \ref{prop:precise_dist}.
\end{proof}

In the next section, we will connect the above constants to properties of the operator system ${\cl S}_{\sf T}$.

\section{Scaling, Lifting Properties, and $1$-Exactness}\label{sec:scale_prop}

If ${\cl S}$ is an operator system, perhaps of infinite dimension, and $A$ is a unital $C^*$-algebra with an ideal $\cl I$ and associated quotient map $\pi: A \to A / \cl I$, then a UCP map
\[ \phi: \cl S \to A / \cl I \]
is said to admit a lift if there is a UCP map 
\[ \psi: \cl S \to A \]
such that $\phi = \pi \circ \psi$. Certainly not every UCP map into a quotient admits a lift, but for some operator systems, lifts may be obtained locally in the following sense. 

\begin{defn} \cite{KPTT}
An operator system $\cl S$ is said to have the {\bf operator system local lifting property}, or {\bf OSLLP}, if the following holds. For every unital $C^*$-algebra $A$, quotient $\pi: A \to A / \cl I$, and UCP map
\[ \phi: \cl S \to A / \cl I, \]
it follows that the restriction $\phi|_{\cl S_0}$ to any finite-dimensional operator subsystem ${\cl S}_0$ admits a lift. If $\cl S$ is itself finite dimensional, we simply say $\cl S$ has the {\bf lifting property}.
\end{defn}
\begin{remark}
 It is necessary to exercise some care when dealing with subsystems. The UCP maps in question take values in $A / {\cl I}$, not $B({\cl H})$, so there is no guarantee that an analogue of Arveson's extension theorem will hold a priori. 
\end{remark}

The Smith-Ward problem \cite{SW} was originally stated in the language of compact perturbations of essential matrix ranges. 
However, this problem is now known to be equivalent to the claim that all $3$-dimensional operator systems have the lifting property, by \cite[Theorem 11.5]{Ka-nuc}. In  \cite[Theorem~3.3]{Pa-1982}, a 5-dimensional operator system without the lifting property is constructed, and this example generates an infinite-dimensional $C^*$-algebra. Remarkably, \cite[Corollary~10.14]{Ka-nuc} shows that there is a 5-dimensional operator subsystem of the $4 \times 4$ matrices that does not possess the lifting property, namely 
\begin{equation}\label{eq:bad_matrix} \cl T_2 = \left\{ \begin{bmatrix} a & b & 0 & 0\\c & a & 0 & 0 \\0 & 0 & a & d \\ 0 & 0 & e & a \end{bmatrix} : a,b,c,d,e \in \bb C \right\}. \end{equation}

On the other hand, from \cite[Lemma~9.10]{Ka-nuc}, it is known that if $\cl S$ is finite-dimensional, then  $\text{OMAX}_k(\cl S)$ has the lifting property. We expand upon this result using $\Wmax{k}({\sf T})$ and the Hausdorff distance, or equivalently, the constant $\gamma({\sf T})$ for an appropriately chosen basis. This context extends and simplifies \cite{Pa-1982}. Similarly, it gives a matrix convex perspective to related results, such as \cite[Proposition 7.4]{Gol-Sing} and \cite[Lemmas 3.7 and 3.10]{Ozawa}.

\begin{thm}\label{thm:lifting_gamma} If ${\sf T}$ is a $d$-tuple of bounded operators, then items $(1)$-$(3)$ are equivalent. If $0$ is in the interior of $\mathcal{W}_1({\sf T})$, then they are also equivalent to $(3^\prime)$.
\begin{enumerate}
\item ${\cl S}_{\sf T}$ has the lifting property.
\item For every $\varepsilon > 0$ and UCP map $\phi: {\cl S}_{\sf T} \to B({\cl H}) / K({\cl H})$, there is a UCP map $\psi: {\cl S}_{\sf T} \to B({\cl H})$ such that $||\pi(\psi({\sf T})) - \phi({\sf T})|| < \varepsilon$.
\item $\mathcal{W}({\sf T}^{k\emph{-max}})$ converges to $\mathcal{W}({\sf T})$ in the Hausdorff distance as $k$ approaches $+\infty$.
\item [($3^{\prime}$)] $\gamma({\sf T}) = 1$.
\end{enumerate}
\end{thm}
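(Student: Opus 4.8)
The plan is to prove the cycle $(1) \Rightarrow (2) \Rightarrow (3) \Rightarrow (1)$, and then establish $(3) \Leftrightarrow (3')$ under the interior hypothesis by invoking Proposition \ref{prop:switch_dist} directly. The equivalence $(3) \Leftrightarrow (3')$ is essentially free: Proposition \ref{prop:switch_dist} already states that $\gamma({\sf T}) = 1$ forces $\mathcal{W}({\sf T}^{k\text{-max}})$ to converge to $\mathcal{W}({\sf T})$ in Hausdorff distance, and that the converse holds when $0$ lies in the interior of $\mathcal{W}_1({\sf T})$. So the real content is the three-way equivalence of the lifting property with the two asymptotic statements.

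For $(1) \Rightarrow (2)$, I would specialize the definition of the lifting property to the Calkin algebra. Since ${\cl S}_{\sf T}$ is finite dimensional, the lifting property says that any UCP map $\phi: {\cl S}_{\sf T} \to B({\cl H})/K({\cl H})$ lifts \emph{exactly} to a UCP map $\psi: {\cl S}_{\sf T} \to B({\cl H})$ with $\pi \circ \psi = \phi$; then $\|\pi(\psi({\sf T})) - \phi({\sf T})\| = 0 < \varepsilon$ trivially. So $(1) \Rightarrow (2)$ is immediate once one unwinds the definitions, and I expect this to be the easy direction. The reason the theorem bothers to state the weaker approximate-lifting condition $(2)$ is that $(2)$ is what one can actually \emph{produce} from the geometric condition $(3)$, and the crux of the argument is showing that approximate lifting into the Calkin algebra already suffices to recover the full lifting property.

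For $(3) \Rightarrow (1)$, the strategy is to use the known fact (cited from \cite[Lemma 9.10]{Ka-nuc}) that $\text{OMAX}_k({\cl S}_{\sf T})$ always has the lifting property, together with the translation (\ref{eq:equality_at_n}) that $\mathcal{W}({\sf T}^{k\text{-max}})$ spans $\text{OMAX}_k({\cl S}_{\sf T})$. Hausdorff convergence $\mathcal{W}({\sf T}^{k\text{-max}}) \to \mathcal{W}({\sf T})$ means that the identity-on-generators maps between ${\cl S}_{\sf T}$ and $\text{OMAX}_k({\cl S}_{\sf T})$ become arbitrarily close to complete order isomorphisms (via \cite[Theorem 5.1]{DDSS}), so ${\cl S}_{\sf T}$ is approximated in an appropriate sense by operator systems known to lift. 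The delicate point is that the lifting property is \emph{not} obviously stable under such approximations: one must take a UCP map $\phi: {\cl S}_{\sf T} \to A/{\cl I}$, precompose with the near-isomorphism to land in $\text{OMAX}_k({\cl S}_{\sf T})$, lift there, and then correct the error as $k \to \infty$. This correction step — passing from a sequence of \emph{approximate} lifts to an exact lift — is where I expect the main obstacle, and it is also precisely the role of condition $(2)$ as an intermediary: the approximate-to-exact upgrade likely proceeds through a compactness or a point-norm limit argument in the finite-dimensional setting, possibly passing through the Calkin-algebra formulation to exploit Arveson-type rigidity. The ordering $(1) \Rightarrow (2) \Rightarrow (3) \Rightarrow (1)$ thus isolates the hard analytic work into the single implication $(2) \Rightarrow (3)$ or $(3) \Rightarrow (1)$, whichever the authors find cleaner, with the remaining implications serving as definitional or citational bookkeeping.

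The step I anticipate being genuinely technical is quantifying the relationship between a UCP map's approximate liftability and the scaling containment $\mathcal{W}({\sf T}^{k\text{-max}}) \subseteq \gamma \cdot \mathcal{W}({\sf T})$. Concretely, I would expect $(2) \Rightarrow (3)$ to require building, from approximate Calkin lifts, a family of UCP maps witnessing that $\mathcal{W}_n({\sf T}^{k\text{-max}})$ is nearly contained in $\mathcal{W}_n({\sf T})$ uniformly in $n$ — and the uniformity across \emph{all} matrix levels is exactly the subtlety the authors flag when distinguishing their Hausdorff notion from that of \cite{Ge-random}. Managing that uniformity, rather than any single-level estimate, will be the heart of the proof.
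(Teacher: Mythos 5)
Your architecture matches the paper's: the cycle $(1)\Rightarrow(2)\Rightarrow(3)\Rightarrow(1)$ (closed through $(3')$ after recoordinatizing so that $0$ is an interior point), with $(3)\Leftrightarrow(3')$ read off from Proposition \ref{prop:switch_dist} and $(1)\Rightarrow(2)$ trivial because a finite-dimensional system with the lifting property lifts exactly. But the two substantive implications are only located, not proved, and in each case the idea that makes the argument work is absent. For $(2)\Rightarrow(3)$ you correctly flag uniformity over all matrix levels as the issue, yet give no mechanism for extracting a level-uniform estimate from a hypothesis about single quotient maps. The paper's device is a contradiction argument built on one infinite direct sum: if $(3)$ fails, choose $\varepsilon>0$ and witnesses ${\sf M}^{(k)}\in\cW({\sf T}^{k\text{-max}})$ at distance at least $\varepsilon$ from $\cW({\sf T})$; since the sets $\cW({\sf T}^{k\text{-max}})$ decrease in $k$ and $\bigcap_k\cW({\sf T}^{k\text{-max}})=\cW({\sf T})$, each tail sum $\bigoplus_{n\ge k}{\sf M}^{(n)}$ is a UCP image of ${\sf T}^{k\text{-max}}$, and passing to the Calkin algebra (where the finitely many discarded summands are a finite-rank, hence compact, perturbation) yields a single UCP map ${\sf T}\mapsto\pi\bigl(\bigoplus_{n=1}^{\infty}{\sf M}^{(n)}\bigr)$. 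Applying $(2)$ to this one map and compressing the resulting lift ${\sf L}=\bigoplus_n{\sf M}^{(n)}+{\sf K}+{\sf E}$ to the blocks of the direct sum produces points ${\sf L}^{(n)}\in\cW({\sf T})$ with $\limsup_n\|{\sf L}^{(n)}-{\sf M}^{(n)}\|\le\varepsilon/2$, contradicting the choice of the ${\sf M}^{(n)}$. Without some such aggregation-and-compression step, condition $(2)$ controls only one level at a time and does not yield the Hausdorff conclusion.

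For the closing implication, the paper proves $(3')\Rightarrow(1)$ by a route that sidesteps entirely the ``approximate lift to exact lift'' upgrade you identify as the main obstacle: it invokes \cite[Theorem~8.5]{KPTT}, characterizing the lifting property by $\cl S_{\sf T}\otimes_{\text{min}}B(\cl H)=\cl S_{\sf T}\otimes_{\text{max}}B(\cl H)$, uses \cite[Lemma~9.10]{Ka-nuc} to get this identity for $\text{OMAX}_k(\cl S_{\sf T})$, and composes the scaling maps $T_i\mapsto\gamma_k^{-1}T_i^{k\text{-max}}$ and $T_i^{k\text{-max}}\mapsto T_i$ (UCP precisely because $\cW({\sf T})\subseteq\cW({\sf T}^{k\text{-max}})\subseteq\gamma_k\,\cW({\sf T})$) to obtain UCP maps $\cl S_{\sf T}\otimes_{\text{min}}B(\cl H)\to\cl S_{\sf T}\otimes_{\text{max}}B(\cl H)$ sending $T_i\otimes A\mapsto\gamma_k^{-1}\,T_i\otimes A$; letting $k\to\infty$ makes the identity map UCP. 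Your alternative---lift a given quotient map through $\text{OMAX}_k(\cl S_{\sf T})$ and ``correct the error as $k\to\infty$''---can be completed, but only by invoking Arveson's theorem that the liftable UCP maps on a separable operator system form a point-norm closed set; that closedness is exactly the nontrivial fact your sketch owes and does not supply. Either cite it and carry out the limiting argument, or adopt the tensor-product route.
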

\begin{proof}
The equivalence {\color{blue} $(3) \iff (3^\prime)$} under the assumption that $0$ is an interior point is exactly Proposition \ref{prop:switch_dist}. Items $(1)$-$(3)$ are unaffected by recoordinatization (including the case when the length of the tuple ${\sf T}$ is reduced), so we assume $0$ is an interior point throughout.

{\color{blue} $(1) \implies (2)$}. This is trivial.

{\color{blue} $(2) \implies (3)$}. Suppose $(2)$ holds but $(3)$ fails. Fix $\varepsilon > 0$ sufficiently small such that for any $k$, we may choose an element ${\sf M}^{(k)}$ of $\cW({\sf T}^{k\text{-max}})$ which is not within $\varepsilon$ of any point in $\cW({\sf T})$. 

Now, ${\sf M}^{(k)} \in \cW({\sf T}^{k\text{-max}})$, so by definition there is a UCP map sending
\[  {\sf T}^{k\text{-max}} \mapsto {\sf M}^{(k)}. \]
However, the sets $\cW({\sf T}^{k\text{-max}})$ are decreasing in $k$, so there is also a UCP map sending
\[ {\sf T}^{k\text{-max}} \mapsto {\sf M}^{(n)}, \hspace{.1 in} n \geq k, \]
and the direct sum gives a UCP map sending
\[ {\sf T}^{k\text{-max}} \mapsto \bigoplus\limits_{n=k}^\infty {\sf M}^{(n)}. \]
The lower index depends on $k$, so the largest direct sum $\bigoplus\limits_{n=1}^\infty {\sf M}^{(n)}$ might not be a UCP image of any ${\sf T}^{k\text{-max}}$. However, since each ${\sf M}^{(n)}$ is a matrix tuple, we may fix the target with a finite rank perturbation. Consequently, there exists a UCP map
\[ {\sf T}^{k\text{-max}} \mapsto \pi\left(\bigoplus\limits_{n=1}^\infty {\sf M}^{(n)}\right)  \]
where $\pi$ denotes the quotient map onto the Calkin Algebra. 

The sets $\cW({\sf T}^{k\text{-max}})$ are such that $\bigcap\limits_{k=1}^\infty \cW({\sf T}^{k\text{-max}}) = \cW({\sf T})$, so from the above we conclude there is a UCP map
\[ {\sf T} \mapsto \pi\left(\bigoplus\limits_{n=1}^\infty {\sf M}^{(n)}\right). \]
By (2), there is a UCP map ${\cl S}_{\sf T} \to B({\cl H})$ with ${\sf T} \mapsto {\sf L}$, where $||\pi({\sf L}) - \pi(\bigoplus\limits_{n=1}^\infty {\sf M}^{(n)})|| < \varepsilon/2$. Now, $\pi({\sf L} - \bigoplus\limits_{n=1}^\infty {\sf M}^{(n)})$ may be lifted to a tuple ${\sf E} \in B({\cl H})^d$ with $||{\sf E}|| < \varepsilon/2$, so we may write 
\[ {\sf L} = \bigoplus\limits_{n=1}^\infty {\sf M}^{(n)} + {\sf K} + {\sf E}\] 
where ${\sf K}$ is compact. The compressions of ${\sf L}$ to the (not necessarily reducing) subspaces corresponding to ${\sf M}^{(n)}$ produce elements ${\sf L}^{(n)}$ of $\cW({\sf L}) \subseteq \cW({\sf T})$ which have $\limsup\limits_{n \to \infty} ||{\sf L}^{(n)} - {\sf M}^{(n)}|| \leq \varepsilon/2 < \varepsilon$, a contradiction of assumption.

{\color{blue} $(3^\prime) \implies (1)$}. In \cite[Theorem~8.5]{KPTT}, it is shown that an operator system $\cl S$ has the OSLLP if and only if $\cl S \otimes_{\text{min}} B(\cl H) = \cl S \otimes_{\text{max}} B(\cl H)$ (that is, the identity map between these two operator systems is a complete order isomorphism). Further, \cite[Lemma~9.10]{Ka-nuc} shows that $\text{OMAX}_k(\cl S_{\sf T})$ has the lifting property.

Assume that $\gamma({\sf T}) = 1$. Let $\gamma_k := \gamma_k({\sf T})$, so that by definition we have $\cW({\sf T}) \subseteq \cW({\sf T}^{k\text{-max}}) \subseteq \gamma_k \,  \cW({\sf T})$, and there exist UCP maps $\phi_k$ and $\psi_k$ satisfying 

\[ \begin{array}{cccc} \phi_k: \cl S_{\sf T} \to \text{OMAX}_k(\cl S_{\sf T}) & & \psi_k: \text{OMAX}_k(\cl S_{\sf T}) \to \cl S_{\sf T} \\ \\ \phi_k(T_i) = \gamma_k^{-1} \, T^{k\text{-max}}_i & & \psi_k(T^{k\text{-max}}_i) = T_i \\ \\ \end{array} \]
for $1 \leq i \leq d$. Thus, for any Hilbert space $\cl H$, we have the following composition of UCP maps.
 \begin{multline*} \cl S_{\sf T} \otimes_{\text{min}} B(\cl H) \stackrel{\phi_k \otimes \text{id}}{\longrightarrow}
 \text{OMAX}_k(\cl S_{\sf T}) \otimes_{\text{min}} B(\cl H)  \stackrel{\text{id} \otimes \text{id}}{\longrightarrow} \\ \text{OMAX}_k(\cl S_{\sf T}) \otimes_{\text{max}} B(\cl H)  \stackrel{\psi_k \otimes \text{id}}{\longrightarrow} \cl S_{\sf T} \otimes_{\text{max}} B(\cl H)
 \end{multline*}

 The above shows that there is a UCP map 
\[ \zeta_k: \cl S_{\sf T} \otimes_{\text{min}} B(\cl H) \to \cl S_{\sf T} \otimes_{\text{max}} B(\cl H)\] 
sending $T_i \otimes A \mapsto \gamma_k^{-1} \, T_i \otimes A$ for all $k \in \mathbb{Z}^+$, $1 \leq i \leq d$, and $A \in B(\cl H)$.  Taking a limit in $k$ then gives that the identity map from $\cl S_{\sf T} \otimes_{\text{min}} B(\cl H)$ to $\cl S_{\sf T} \otimes_{\text{max}} B(\cl H)$ is UCP. As the identity map in the reverse direction is always UCP, we have that $\cl S_{\sf T} \otimes_{\text{min}} B(\cl H) = \cl S_{\sf T} \otimes_{\text{max}} B(\cl H)$ for any Hilbert space $\cl H$. Applying \cite[Theorem~8.5]{KPTT} yields that $\cl S_{\sf T}$ has the lifting property, as desired.
\end{proof}

Item (2) should be compared with the claim \cite[Proposition 7.4]{Gol-Sing}, which is given for $C^*$-algebras and approximations in a slightly different manner. See also \cite[Lemmas 3.7 and 3.10]{Ozawa}, which are stated for operator systems. We further note that the special role of the Calkin algebra in determining the lifting property is not a surprise, as in \cite[Proposition 3.13]{Ozawa}.

 If ${\sf A} \in M_n^d$ is a tuple of matrices, then the associated {\it free spectrahedron} is defined by

\[ D_{\sf A}(k) = \left\{ X \in M_k(\mathbb{C})^d: \text{Re}\left( \sum\limits_{j=1}^d X_j \otimes A_j \right) \leq I \right\}. \]
One may similarly define free spectrahedra in purely self-adjoint coordinates, with $\sum\limits_{j=1}^d \limits X_j \otimes A_j \leq I$. Modulo a minor headache from a change of sign (as $i^2 = -1$), these presentations are equivalent for tuples of non self-adjoint complex matrices. A free spectrahedron $D_{\sf A}$ is a closed matrix convex set, and it is bounded precisely when $0$ is an interior point of $\mathcal{W}_1({\sf A})$ by \cite[Lemma 3.4]{DDSS}. That result also shows that, in this case, $D_{\sf A}$ and $\mathcal{W}({\sf A})$ are mutually polar dual (see also \cite[Proposition 4.3]{Helton}). By following the polar dual, we see that an operator system $\mathcal{S}_{\sf T}$ with properly positioned coordinates has the lifting property precisely when $\mathcal{W}({\sf T})$ is well-approximated by free spectrahedra.

\begin{cor}\label{cor:lifting_finite}
Let ${\sf T} \in B({\cl H})^d$, and assume $0$ is an interior point of $\mathcal{W}_1({\sf T})$. Then $\mathcal{S}_{\sf T}$ has the lifting property if and only if for any $0 < \varepsilon < 1$, there is a free spectrahedron $D_{\sf A}$ such that $(1 - \varepsilon) D_{\sf A} \subseteq \mathcal{W}({\sf T}) \subseteq (1 + \varepsilon) D_{\sf A}$.
\end{cor}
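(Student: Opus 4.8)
The plan is to deduce everything from Theorem~\ref{thm:lifting_gamma}, which under the standing hypothesis that $0$ is an interior point reduces the lifting property to the single scaling condition $\gamma({\sf T}) = 1$. The bridge to free spectrahedra is the observation that \emph{bounded free spectrahedra are exactly the $N$-maximal matrix convex sets}. Concretely, if ${\sf A} \in M_N^d$, then Arveson extension together with the Choi--Kraus form of a UCP map $M_N \to M_n$ writes every point of $\mathcal{W}_n({\sf A})$ as a matrix convex combination $\sum_i V_i^* {\sf A} V_i$ of copies of the single point ${\sf A} \in \mathcal{W}_N({\sf A})$; hence $\mathcal{W}({\sf A}) = \Wmin{N}(\mathcal{W}({\sf A}))$. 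Dualizing through $D_{\sf A} = \mathcal{W}({\sf A})^\circ$ and (\ref{eq:polar_dual_min_max}) then gives $D_{\sf A} = \Wmax{N}(D_{\sf A})$, so each bounded free spectrahedron is $N$-maximal for $N$ its matrix size, and conversely the polar dual of a finite matrix range is a free spectrahedron. I would record this dictionary first, paying attention to the interior-point hypotheses needed for the polar-dual identities.

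For the direction \textbf{free spectrahedral approximation} $\Rightarrow$ \textbf{lifting}, suppose $(1-\varepsilon)D_{\sf A} \subseteq \mathcal{W}({\sf T}) \subseteq (1+\varepsilon)D_{\sf A}$ with ${\sf A} \in M_N^d$. Restricting the right inclusion to level $N$ and applying the monotone, scale-homogeneous operator $\Wmax{N}$ (which depends only on level $N$) yields $\Wmax{N}(\mathcal{W}({\sf T})) \subseteq (1+\varepsilon)\Wmax{N}(D_{\sf A}) = (1+\varepsilon)D_{\sf A}$, while the left inclusion gives $D_{\sf A} \subseteq (1-\varepsilon)^{-1}\mathcal{W}({\sf T})$. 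Hence $\gamma_N({\sf T}) \le (1+\varepsilon)/(1-\varepsilon)$; letting $\varepsilon \to 0$ and using that $\gamma_k$ is non-increasing forces $\gamma({\sf T}) = 1$, so Theorem~\ref{thm:lifting_gamma} supplies the lifting property.

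For the converse I assume $\gamma({\sf T}) = 1$ and fix $\varepsilon > 0$, choosing $k$ with $\gamma_k({\sf T})$ near $1$, so that $\mathcal{W}({\sf T}) \subseteq \Wmax{k}(\mathcal{W}({\sf T})) \subseteq \gamma_k\,\mathcal{W}({\sf T})$. The set $\Wmax{k}(\mathcal{W}({\sf T}))$ is $k$-maximal but not a priori a \emph{single} free spectrahedron, so I would approximate it. Passing to polar duals, $\Wmax{k}(\mathcal{W}({\sf T}))^\circ = \Wmin{k}(\mathcal{W}({\sf T})^\circ)$ is the minimal matrix convex set over the compact level-$k$ fiber $\mathcal{K} := (\mathcal{W}({\sf T})^\circ)_k$. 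Choosing a fine finite net $F = \{{\sf B}_1, \ldots, {\sf B}_m\} \subseteq \mathcal{K}$, the set $\Wmin{k}(F)$ is exactly $\mathcal{W}\big(\bigoplus_i {\sf B}_i\big)$, a finite matrix range, so its polar dual $D_{\sf B}$ with ${\sf B} = \bigoplus_i {\sf B}_i$ is a genuine free spectrahedron. Since $k$-minimal sets are generated by their $k$-th level, a level-$k$ estimate $\mathrm{dist}(\Wmin{k}(F)_k, \mathcal{K}) \le \eta$ propagates to all levels (via $\|\sum_i V_i^*(\,\cdot\,)V_i\| \le \|\sum_i V_i^*V_i\| = 1$), and Proposition~\ref{prop:precise_dist} converts it into $a^{-1}\Wmin{k}(\mathcal{K}) \subseteq \Wmin{k}(F)$ with $a \to 1$ as $\eta \to 0$. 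Dualizing back gives $\Wmax{k}(\mathcal{W}({\sf T})) \subseteq D_{\sf B} \subseteq a\,\Wmax{k}(\mathcal{W}({\sf T}))$, whence $\mathcal{W}({\sf T}) \subseteq D_{\sf B} \subseteq a\gamma_k\,\mathcal{W}({\sf T})$; choosing $k$ and then $\eta$ so that $a\gamma_k \le (1-\varepsilon)^{-1}$ produces $(1-\varepsilon)D_{\sf B} \subseteq \mathcal{W}({\sf T}) \subseteq (1+\varepsilon)D_{\sf B}$, as required.

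The step I expect to be the main obstacle is this finite-net reduction in the converse: I must ensure that finitely many level-$k$ points approximate the compact fiber $\mathcal{K}$ well enough in the \emph{scaling} sense, not merely the Hausdorff sense, and that all auxiliary sets retain $0$ in their interiors so that the polar-dual identities (\ref{eq:polar_dual_min_max}) and the second half of Proposition~\ref{prop:precise_dist} apply. Tracking these interior-point and boundedness conditions, and confirming that the single level-$k$ net estimate genuinely controls every higher level of the $k$-minimal sets, is the delicate part; the remaining manipulations are routine once the ``free spectrahedra equal $N$-maximal sets'' dictionary is in place.
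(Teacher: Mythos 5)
Your proposal is correct and follows essentially the same route as the paper: reduce to $\gamma({\sf T})=1$ via Theorem~\ref{thm:lifting_gamma}, use the polar-dual identities (\ref{eq:polar_dual_min_max}) to trade $\Wmax{k}(\cW({\sf T}))$ for the $k$-minimal set over the compact level-$k$ fiber of the dual, and approximate that fiber by finitely many points whose direct sum yields the desired free spectrahedron. The extra details you supply (the ``bounded free spectrahedra are $N$-maximal'' dictionary, the interior-point bookkeeping, and running the easy direction on the primal side via monotonicity of $\Wmax{N}$) are exactly the steps the paper leaves implicit, and they check out.
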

\begin{proof}
From Theorem \ref{thm:lifting_gamma}, ${\cl S}_{\sf T}$ has the lifting property whenever $\gamma({\sf T}) = 1$, equivalently $\beta({\sf Q}) = 1$ for the dual system ${\cl S}_{\sf Q}$. However, each individual level $\mathcal{W}_k({\sf Q})$ may be approximated to arbitrary precision, in a scaling sense, by the convex hull of finitely many of its points. This produces an approximation of $\mathcal{W}({\sf Q}^{k\text{-min}})$ (from below) by $\mathcal{W}({\sf A})$ for a matrix tuple ${\sf A}$, hence an approximation for $\mathcal{W}({\sf T}^{k\text{-max}})$ (from above) by $D_{\sf A}$. The converse follows similarly.
\end{proof}

Note that since a free spectrahedron $D_{\sf A}$ has the property that for $c \not= 0$, $c \, D_{\sf A} = D_{c^{-1} {\sf A}}$, the approximation of $\mathcal{W}({\sf T})$ may be chosen from above or from below, as appropriate. Corollary \ref{cor:lifting_finite} should be compared with \cite[Proposition 3.9]{Kriel}. As a consequence of the above theorems, the operator system of (\ref{eq:bad_matrix}) corresponds to a matrix range with $\gamma({\sf T}) > 1$.

\begin{cor}\label{cor:matrix_units}
Let ${\sf T} = (E_{1,2}, E_{3, 4})$ where $E_{i, j}$ denotes a matrix unit in the $4 \times 4$ matrices. Then $\gamma(\sf T) > 1$.
\end{cor}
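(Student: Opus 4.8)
The plan is to recognize that $\cl S_{\sf T}$ is precisely the operator system $\cl T_2$ of \eqref{eq:bad_matrix}, which is known to fail the lifting property, and then to read off $\gamma({\sf T}) > 1$ from Theorem \ref{thm:lifting_gamma}. First I would write out $\cl S_{\sf T} = \mathrm{span}\{I, E_{1,2}, E_{3,4}, E_{2,1}, E_{4,3}\}$ and note that a general element $a I + b E_{1,2} + c E_{2,1} + d E_{3,4} + e E_{4,3}$ is exactly the matrix displayed in \eqref{eq:bad_matrix}. Thus $\cl S_{\sf T} = \cl T_2$, and by \cite[Corollary 10.14]{Ka-nuc} this system does not have the lifting property.

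To invoke the equivalence $(1) \iff (3^\prime)$ of Theorem \ref{thm:lifting_gamma}, I must verify that $0$ lies in the interior of $\cW_1({\sf T})$. Using Arveson's theorem to realize first-level UCP maps by density matrices $\rho$ on $\mathbb{C}^4$, one has $\cW_1({\sf T}) = \{(\mathrm{tr}(\rho E_{1,2}), \mathrm{tr}(\rho E_{3,4}))\} = \{(\rho_{2,1}, \rho_{4,3})\}$. The positivity of the two $2 \times 2$ principal submatrices gives $|\rho_{2,1}|^2 \le \rho_{1,1}\rho_{2,2}$ and $|\rho_{4,3}|^2 \le \rho_{3,3}\rho_{4,4}$, so with $\mathrm{tr}(\rho)=1$ the arithmetic–geometric mean inequality yields $|\rho_{2,1}| + |\rho_{4,3}| \le 1/2$; conversely every such pair is attained by a block-diagonal $\rho$. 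Hence $\cW_1({\sf T}) = \{(z,w) \in \mathbb{C}^2 : |z| + |w| \le 1/2\}$, which plainly contains $0$ in its interior.

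With both ingredients in hand, the conclusion is immediate: since $\cl S_{\sf T}$ fails the lifting property and $0$ is interior to $\cW_1({\sf T})$, item $(3^\prime)$ of Theorem \ref{thm:lifting_gamma} must fail as well, so $\gamma({\sf T}) \ne 1$. Because $\gamma({\sf T}) \ge 1$ by definition (as $\cW({\sf T}) \subseteq \Wmax{k}(\cW({\sf T}))$ for every $k$ by matrix convexity), this forces $\gamma({\sf T}) > 1$. I do not anticipate any serious obstacle here; the one computational step is the identification of the first level $\cW_1({\sf T})$, which is routine, and its sole role is to license the use of $(3^\prime)$ rather than merely $(1)$–$(3)$. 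If one wished to avoid even that computation, an alternative is to recoordinatize $\cl S_{\sf T}$ so that $0$ becomes interior and then argue that the resulting $\gamma > 1$; however, the direct verification above is cleaner and keeps the given coordinates.
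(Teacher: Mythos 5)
Your proof is correct and takes essentially the same route as the paper: identify $\cl S_{\sf T}$ with $\cl T_2$ from (\ref{eq:bad_matrix}), cite \cite[Corollary 10.14]{Ka-nuc} for the failure of the lifting property, check that $0$ is interior to $\cW_1({\sf T})$, and invoke Theorem \ref{thm:lifting_gamma}. The only difference is that you carry out the explicit (and correct) computation $\cW_1({\sf T}) = \{(z,w) \in \mathbb{C}^2 : |z|+|w| \le 1/2\}$, whereas the paper simply asserts that $0$ is in the interior of the joint numerical range.
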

\begin{proof}
The operator system ${\cl S}_{\sf T}$ is precisely ${\cl T}_2$ as given in (\ref{eq:bad_matrix}), so it does not have the lifting property by \cite[Corollary 10.14]{Ka-nuc}. Note that $0$ is in the interior of the numerical range, so Theorem \ref{thm:lifting_gamma} shows that $\gamma({\sf T}) > 1$.
\end{proof}

We have not been able to compute $\gamma(\sf T)$ for ${\sf T}= (E_{1,2}, E_{3,4})$, and we believe that its precise value (or reasonably tight bounds) would be of interest.

Next, we turn to the concept of {\bf $1$-exactness} for operator systems. This concept originated in the work of Kirchberg \cite{Ki} in the setting of $C^*$-algebras and is related to how the minimal tensor product
behaves with respect to quotients.  A definition of $1$-exactness for operator systems was introduced in \cite[Definition 5.4]{KPTT}. It was subsequently studied in \cite{Ka-nuc}, where the more simplified term exact was used for the same concept (though we will not use this term).  Note that $1$-exactness of an operator system is detected solely from information about finite-dimensional subsystems \cite[Corollary 5.8]{KPTT}. Further, there is a tensor product of operator systems, called the {\bf el-tensor}, which detects $1$-exactness in the following sense.

\begin{thm}\cite[Theorem~5.7]{KPTT}. An operator system $\cl S$ is $1$-exact if and only if for every operator system $\cl T$, ${\cl S} \otimes_{\emph{min}} \cl T = \cl S \otimes_{\emph{el}} \cl T$.
\end{thm}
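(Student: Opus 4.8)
The plan is to treat the statement as the operator-system analogue of Kirchberg's theorem that a $C^*$-algebra is exact exactly when $\otimes_{\text{min}}$ respects quotients, with the $\text{el}$-tensor playing the role of the ``quotient-respecting repair'' of $\otimes_{\text{min}}$. Recall \cite[Def.~5.4]{KPTT} that $\cl S$ is $1$-exact when, for every unital $C^*$-algebra $A$ with ideal $\cl J$, the canonical unital completely positive bijection $q_{A,\cl J}\colon (\cl S\otimes_{\text{min}}A)/(\cl S\otimes_{\text{min}}\cl J)\to\cl S\otimes_{\text{min}}(A/\cl J)$ is a complete order isomorphism; that is, $\otimes_{\text{min}}$ sends the sequence $\cl J\hookrightarrow A\twoheadrightarrow A/\cl J$ to a quotient sequence of operator systems. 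Since $\cl S\otimes_{\text{el}}\cl T$ dominates $\cl S\otimes_{\text{min}}\cl T$, the identity $\cl S\otimes_{\text{el}}\cl T\to\cl S\otimes_{\text{min}}\cl T$ is always unital completely positive, and the asserted equality is precisely the assertion that its inverse is completely positive at every matrix level. I will use three structural properties of $\otimes_{\text{el}}$: it is functorial and injective in its left variable, it dominates $\otimes_{\text{min}}$, and—being assembled from the projective tensor product $\otimes_{\text{max}}$ via an injective envelope—it is itself projective.

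For the forward implication I would invoke the characterization of $1$-exactness by embeddability: if $\cl S$ is $1$-exact, it sits completely order isomorphically as an operator subsystem of a nuclear $C^*$-algebra $N$. Fix an arbitrary operator system $\cl T$. Because a nuclear $C^*$-algebra satisfies $N\otimes_{\text{min}}\cl T=N\otimes_{\text{max}}\cl T$ against every operator system, all intermediate tensor products collapse, so in particular $N\otimes_{\text{el}}\cl T=N\otimes_{\text{min}}\cl T$. Left-injectivity of $\otimes_{\text{el}}$ then gives a complete order embedding $\cl S\otimes_{\text{el}}\cl T\hookrightarrow N\otimes_{\text{el}}\cl T$, while injectivity of $\otimes_{\text{min}}$ gives $\cl S\otimes_{\text{min}}\cl T\hookrightarrow N\otimes_{\text{min}}\cl T$ with the inherited structure. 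Since the two ambient operator systems coincide and both restrictions land on the same subspace $\cl S\otimes\cl T$, the el- and min-structures on $\cl S\otimes\cl T$ agree, i.e. $\cl S\otimes_{\text{min}}\cl T=\cl S\otimes_{\text{el}}\cl T$.

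For the converse I would assume $\cl S\otimes_{\text{min}}\cl T=\cl S\otimes_{\text{el}}\cl T$ for every $\cl T$ and verify $1$-exactness directly on an arbitrary pair $(A,\cl J)$. Projectivity of $\otimes_{\text{el}}$ identifies $\cl S\otimes_{\text{el}}(A/\cl J)$ with $(\cl S\otimes_{\text{el}}A)/(\cl S\otimes_{\text{el}}\cl J)$ completely order isomorphically. Applying the hypothesis to the unital operator systems $A$ and $A/\cl J$—and transferring the identity to the kernel $\cl J$ by realizing $\cl S\otimes_{\text{min}}\cl J$ and $\cl S\otimes_{\text{el}}\cl J$ as the structures induced on $\cl S\otimes\cl J$ inside the now-equal systems $\cl S\otimes_{\text{min}}A=\cl S\otimes_{\text{el}}A$—lets me replace every $\text{el}$ by $\text{min}$. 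The resulting identity $\cl S\otimes_{\text{min}}(A/\cl J)=(\cl S\otimes_{\text{min}}A)/(\cl S\otimes_{\text{min}}\cl J)$ is exactly the statement that $q_{A,\cl J}$ is a complete order isomorphism, so $\cl S$ is $1$-exact.

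The main obstacle is the projectivity of $\otimes_{\text{el}}$ at the level of matricial order cones—that tensoring $\cl J\hookrightarrow A\twoheadrightarrow A/\cl J$ with $\cl S$ in the el-structure stays a quotient sequence of operator systems, not merely of vector spaces—which must be extracted from the projectivity of $\otimes_{\text{max}}$ together with the behaviour of the injective envelope under quotients, and which requires extra care because the kernel $\cl J$ is a non-unital self-adjoint subspace rather than a unital operator system. A secondary but essential input is the characterization of $1$-exactness by complete order embedding into a nuclear $C^*$-algebra used in the forward direction; establishing that characterization (the operator-system form of Kirchberg--Wassermann) is itself substantial. Once these are in hand, the remaining comparisons are the routine functoriality and injectivity manipulations sketched above.
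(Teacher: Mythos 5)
The statement you were asked to prove is quoted verbatim from \cite[Theorem~5.7]{KPTT}; the present paper supplies no proof of it, so your attempt can only be measured against the argument in \cite{KPTT}. Against that standard, your forward direction rests on a premise that is not available: you assume that a $1$-exact operator system embeds completely order isomorphically into a nuclear $C^*$-algebra. This is not an operator-system form of Kirchberg's embedding theorem; it is an assertion that is at best open, and this very paper warns against it in the Remark following Corollary \ref{cor:exactness_finite}, citing \cite[Corollary 18]{Ki-Wa} as evidence that even nuclear operator systems may fail to embed into nuclear $C^*$-algebras. The proof in \cite{KPTT} instead proceeds locally: $1$-exactness of an operator system coincides with $1$-exactness of the underlying operator space (\cite[Proposition 5.5]{KPTT}), is detected on finite-dimensional subsystems, and for those one uses approximation in cb-distance by subspaces of matrix algebras together with $M_n \otimes_{\text{min}} \cl T = M_n \otimes_{\text{max}} \cl T$ to push min-positivity up to $I(\cl S) \otimes_{\text{max}} \cl T$. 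Your auxiliary claim that a nuclear $C^*$-algebra $N$ satisfies $N \otimes_{\text{min}} \cl T = N \otimes_{\text{max}} \cl T$ for every operator system $\cl T$ is correct (via the completely positive approximation property factoring through matrix algebras), but without the embedding it has nothing to act on.

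Your converse direction has the right skeleton, but its entire content is concentrated in the unproven assertion that $\otimes_{\text{el}}$ is projective, i.e.\ that $(\cl S \otimes_{\text{el}} A)/(\cl S \bar{\otimes} \cl J) \to \cl S \otimes_{\text{el}} (A/\cl J)$ is a complete order isomorphism. That is a genuine theorem of \cite{KPTT} (the opening results of their Section~5), proved by combining the projectivity of $\otimes_{\text{max}}$ with the presentation $\cl S \otimes_{\text{el}} \cl T \subseteq I(\cl S) \otimes_{\text{max}} \cl T$ and a careful identification of $\cl S \bar{\otimes} \cl J$ as an operator system kernel; it does not follow by formal manipulation, and the non-unitality of $\cl J$ is precisely where the work lies. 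Your step of ``transferring the identity to the kernel $\cl J$'' is also misdirected: the hypothesis is never applied to $\cl J$ itself; what must be checked is that the kernels of the two quotient maps coincide as closed subspaces of $\cl S \otimes_{\text{min}} A = \cl S \otimes_{\text{el}} A$, which is automatic once the ambient systems are identified (a unital complete order isomorphism is a complete isometry). In summary, the backward implication is a correct outline awaiting its main lemma, while the forward implication needs to be rebuilt on the local approximation characterization of $1$-exactness rather than on a global embedding into a nuclear $C^*$-algebra.
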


In the same vein as our results for the lifting property, $1$-exactness of a finite-dimensional operator system $\mathcal{S}_{\sf T}$ may be detected through convergence of $\cW({\sf T}^{k\text{-min}})$ to $\cW({\sf T})$, or equivalently by examining $\beta({\sf T})$ when the basis is appropriately positioned.

\begin{thm}\label{thm:exactness_beta} If ${\sf T} \in B(\cl H)^{d}$, then items $(1)$ and $(2)$ are equivalent. If $0$ is in the interior of $\mathcal{W}_1({\sf T})$, then they are also equivalent to $(2^\prime)$.
\begin{enumerate}
\item $\cl S_{\sf T}$ is $1$-exact.
\item $\mathcal{W}({\sf T}^{k\emph{-min}})$ converges to $\mathcal{W}({\sf T})$ in the Hausdorff distance as $k$ approaches $+\infty$.
\item [$(2^\prime)$] $\beta({\sf T}) = 1$.
\end{enumerate}
\end{thm}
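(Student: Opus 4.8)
The plan is to deduce the whole statement from Theorem \ref{thm:lifting_gamma} by passing to the dual operator system, so that $\beta$ on the min side becomes $\gamma$ on the max side. The equivalence $(2)\iff(2')$ under the assumption that $0$ is in the interior of $\cW_1({\sf T})$ is immediate from Proposition \ref{prop:switch_dist}, so the real content is $(1)\iff(2)$. As in the proof of Theorem \ref{thm:lifting_gamma}, both $(1)$ and $(2)$ are unaffected by recoordinatizing: $1$-exactness is intrinsic to the operator system, while an invertible linear change of coordinates $L$ sends $\cW({\sf T})$ to $L(\cW({\sf T}))$ levelwise, commutes with $\Wmin{k}$, and is bi-Lipschitz, hence preserves convergence of $\cW({\sf T}^{k\text{-min}})$ to $\cW({\sf T})$. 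Since every finite-dimensional operator system has a presentation with $0$ in the interior of its first level, I may assume this throughout and reduce to proving $(1)\iff(2')$.

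The central step is the passage to the dual. Under the interior hypothesis, Proposition \ref{prop:polar_opsys_dual} produces a tuple ${\sf R}=(\delta_1,\ldots,\delta_d)$ spanning the dual operator system $\cl S_{\sf R}=\cl S_{\sf T}^\prime$ with $\cW({\sf R})=\cW({\sf T})^\circ$; moreover the polar dual of a bounded matrix convex set with $0$ in the interior is again bounded with $0$ in the interior, so Theorem \ref{thm:lifting_gamma} applies to ${\sf R}$. I would then invoke the finite-dimensional duality between $1$-exactness and the lifting property \cite{Ka-nuc}: $\cl S_{\sf T}$ is $1$-exact if and only if its dual $\cl S_{\sf R}$ has the lifting property. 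By Theorem \ref{thm:lifting_gamma}, the latter is equivalent to $\gamma({\sf R})=1$.

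It remains to identify $\gamma({\sf R})$ with $\beta({\sf T})$, and this is exactly where the polar-dual interchange (\ref{eq:polar_dual_min_max}) of $\Wmin{k}$ and $\Wmax{k}$ is used. Writing $\cl C=\cW({\sf T})$, so that $\cW({\sf R})=\cl C^\circ$ and $(\cl C^\circ)^\circ=\cl C$, equation (\ref{eq:polar_dual_min_max}) gives $\Wmax{k}(\cl C^\circ)^\circ=\Wmin{k}(\cl C)$. Since the polar reverses inclusions and satisfies $(r\,\cl D)^\circ=r^{-1}\cl D^\circ$, dualizing the inclusion that defines $\gamma_k$ yields
\[ \Wmax{k}(\cl C^\circ)\subseteq r\,\cl C^\circ \quad\Longleftrightarrow\quad \cl C\subseteq r\,\Wmin{k}(\cl C), \]
whence $\gamma_k({\sf R})=\beta_k({\sf T})$ for every $k$ and $\gamma({\sf R})=\beta({\sf T})$ in the limit. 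Chaining the equivalences gives that $\cl S_{\sf T}$ is $1$-exact if and only if $\gamma({\sf R})=1$, if and only if $\beta({\sf T})=1$, which is $(2')$; together with $(2)\iff(2')$ this completes $(1)\iff(2)$.

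The main obstacle is the exactness--lifting duality input, since I do not expect a direct analytic proof of the hard implication $(1)\Rightarrow(2')$ analogous to the Calkin-algebra argument in Theorem \ref{thm:lifting_gamma}. For a partially self-contained treatment, the easy implication $(2')\Rightarrow(1)$ can instead be obtained by dualizing the tensor argument of Theorem \ref{thm:lifting_gamma}: the inclusions $\cW({\sf T}^{k\text{-min}})\subseteq\cW({\sf T})\subseteq\beta_k\,\cW({\sf T}^{k\text{-min}})$ give UCP maps $\cl S_{\sf T}\to\text{OMIN}_k(\cl S_{\sf T})$ and $\text{OMIN}_k(\cl S_{\sf T})\to\cl S_{\sf T}$ scaling the generators by $1$ and $\beta_k^{-1}$, and using the functoriality of $\otimes_{\text{min}}$ and $\otimes_{\text{el}}$ \cite{KPTT} together with the $1$-exactness of $\text{OMIN}_k(\cl S_{\sf T})$ one factors the map $T_i\otimes a\mapsto\beta_k^{-1}\,T_i\otimes a$ through $\cl S_{\sf T}\otimes_{\text{min}}\cl T$ and $\cl S_{\sf T}\otimes_{\text{el}}\cl T$, then lets $\beta_k\to1$. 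Since the converse of that implication again seems to require duality, I would present the entire equivalence through the dual.
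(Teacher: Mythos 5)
Your proposal is correct and follows essentially the same route as the paper: the implication $(1)\Rightarrow(2')$ in the paper is exactly your chain (polar dual via Proposition \ref{prop:polar_opsys_dual}, Kavruk's exactness/lifting duality, Theorem \ref{thm:lifting_gamma}, and the interchange (\ref{eq:polar_dual_min_max}) to identify $\gamma$ of the dual with $\beta$ of the original). The only divergence is that for $(2')\Rightarrow(1)$ the paper does not invoke the converse of the duality theorem but runs the direct tensor argument you sketch as an alternative --- using the $1$-exactness of $\mathrm{OMIN}_k(\mathcal{S}_{\sf T})$ and the el-tensor characterization of $1$-exactness from \cite{KPTT} --- so both of your suggested routes appear, just packaged across the two directions.
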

\begin{proof}
First, consider that when $0$ is an interior point of $\mathcal{W}_1({\sf T})$, {\color{blue} $(2) \iff (2^\prime)$} follows from Proposition \ref{prop:switch_dist}. As before, $(1)$ and $(2)$ are not affected by recoordinatization, so we may assume $0$ is an interior point of $\mathcal{W}_1({\sf T})$ throughout.

{\color{blue} (1) $\implies (2^\prime)$}.  Since $0$ is in the interior of $\cW_1({\sf T})$, the polar dual of $\mathcal{W}({\sf T})$ is closed and bounded (where we use the self-adjoint polar dual instead if needed). If $\cW({\sf T})^\circ = \cW({\sf Q})$, then as in Proposition \ref{prop:polar_opsys_dual} and the comments thereafter, $\mathcal{S}_{\sf T}$ is completely isometrically isomorphic to the operator system dual of $\mathcal{S}_{\sf Q}$, and vice-versa by the bipolar theorem. Since $\mathcal{S}_{\sf T}$ is $1$-exact, \cite[Theorem 6.6]{Ka-nuc} shows that $\mathcal{S}_{\sf Q}$ has the lifting property. From Theorem \ref{thm:lifting_gamma}, we have $\gamma({\sf Q}) = 1$. Applying (\ref{eq:polar_dual_min_max}) gives $\beta({\sf T}) = 1$.

{\color{blue} $(2^\prime) \implies (1)$}. Assume $\beta({\sf T}) = 1$. By \cite[Lemma~9.8]{Ka-nuc}, the operator system $\text{OMIN}_k(\cl S_{\sf T})$ is $1$-exact for every $k$. Hence, given any operator system $\cl T$, we have that $\text{OMIN}_k(\cl S_{\sf T}) \otimes_{\text{min}} \cl T = \text{OMIN}_k(\cl S_{\sf T}) \otimes_{\text{el}} \cl T$, and we may follow a similar argument as for the lifting property. 

Let $\beta_k := \beta_k({\sf T})$, so that by definition $\cW({\sf T}^{k\text{-min}}) \subseteq \cW({\sf T}) \subseteq \beta_k \, \cW({\sf T}^{k\text{-min}})$, and there exist UCP maps $\phi_k$ and $\psi_k$ satisfying 

\[ \begin{array}{cccc} \phi_k: \text{OMIN}_k(\cl S_{\sf T}) \to \cl S_{\sf T}  & & \psi_k: \cl S_{\sf T} \to \text{OMIN}_k(\cl S_{\sf T}) \\ \\ \phi_k(T_i^{k\text{-min}}) = \beta_k^{-1} \, T_i & & \psi_k(T_i) = T^{k\text{-min}}_i\\ \\ \end{array} \]
for $1 \leq i \leq d$. Thus, for any operator system $\cl T$, we have the following composition of UCP maps.
 \begin{multline*} \cl S_{\sf T} \otimes_{\text{min}} {\cl T} \stackrel{\psi_k \otimes \text{id}}{\longrightarrow}
 \text{OMIN}_k(\cl S_{\sf T}) \otimes_{\text{min}} {\cl T}  \stackrel{\text{id} \otimes \text{id}}{\longrightarrow} \text{OMIN}_k(\cl S_{\sf T}) \otimes_{\text{el}} {\cl T}  \stackrel{\phi_k \otimes \text{id}}{\longrightarrow} \cl S_{\sf T} \otimes_{\text{el}} {\cl T}
 \end{multline*}

 The above shows that there is a UCP map
\[ \zeta_k: \cl S_{\sf T} \otimes_{\text{min}} {\cl T} \to \cl S_{\sf T} \otimes_{\text{el}} {\cl T}\] 
sending $T_i \otimes A \mapsto \beta_k^{-1} \, T_i \otimes A$ for all $k \in \mathbb{Z}^+$, $1 \leq i \leq d$, and $A \in {\cl T}$.  Taking a limit in $k$ shows that the identity map from $\cl S_{\sf T} \otimes_{\text{min}} {\cl T}$ to $\cl S_{\sf T} \otimes_{\text{el}} {\cl T}$ is UCP. As the identity map in the reverse direction is always UCP, we have that $\cl S_{\sf T} \otimes_{\text{min}} {\cl T} = \cl S_{\sf T} \otimes_{\text{el}} {\cl T}$ for any operator system ${\cl T}$. Finally, applying \cite[Theorem~5.7]{KPTT} gives that $\cl S_{\sf T}$ is $1$-exact.
\end{proof}

\begin{cor}\label{cor:both_alpha} If ${\sf T} \in B(\cl H)^d$, then items $(1)$ and $(2)$ are equivalent. If $0$ is in the interior of $\cl W_1(\sf T)$, then they are also equivalent to $(2^{\prime})$.
\begin{enumerate}
\item $\cl S_{\sf T}$ has the lifting property and is $1$-exact.
\item The Hausdorff distance between $\mathcal{W}({\sf T}^{k\emph{-min}})$ and $\mathcal{W}({\sf T}^{k\emph{-max}})$ approaches $0$ as $k$ approaches $+\infty$.
\item [$(2^{\prime})$] $\alpha(\sf T) =1$.
\end{enumerate}
\end{cor}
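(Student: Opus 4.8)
The plan is to assemble Corollary \ref{cor:both_alpha} from the two main theorems already proved, using the trivial bounds in (\ref{eq:trivial_bounds}) as the bridge between $\alpha$ and the pair $(\beta, \gamma)$. First I would handle the equivalence $(1) \iff (2)$. Statement $(1)$ says $\mathcal{S}_{\sf T}$ simultaneously has the lifting property and is $1$-exact. By Theorem \ref{thm:lifting_gamma}, the lifting property is equivalent to the convergence of $\mathcal{W}({\sf T}^{k\text{-max}})$ to $\mathcal{W}({\sf T})$ in Hausdorff distance, and by Theorem \ref{thm:exactness_beta}, $1$-exactness is equivalent to the convergence of $\mathcal{W}({\sf T}^{k\text{-min}})$ to $\mathcal{W}({\sf T})$. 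Since $\mathcal{W}({\sf T}^{k\text{-min}}) \subseteq \mathcal{W}({\sf T}) \subseteq \mathcal{W}({\sf T}^{k\text{-max}})$ at every level, a triangle-inequality argument shows that the Hausdorff distance between $\mathcal{W}({\sf T}^{k\text{-min}})$ and $\mathcal{W}({\sf T}^{k\text{-max}})$ tends to $0$ if and only if each converges separately to the sandwiched set $\mathcal{W}({\sf T})$. This gives $(1) \iff (2)$ cleanly, with no interior-point hypothesis needed, exactly as in the statement.

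For the equivalence with $(2^\prime)$ under the interior-point assumption, I would translate everything into the scaling constants. With $0$ in the interior of $\mathcal{W}_1({\sf T})$, Theorem \ref{thm:lifting_gamma} gives that the lifting property is equivalent to $\gamma({\sf T}) = 1$, and Theorem \ref{thm:exactness_beta} gives that $1$-exactness is equivalent to $\beta({\sf T}) = 1$. So $(1)$ is equivalent to the conjunction $\beta({\sf T}) = \gamma({\sf T}) = 1$. It then remains to show this conjunction is equivalent to $\alpha({\sf T}) = 1$. This is where the trivial bounds (\ref{eq:trivial_bounds}) do the work: passing to the limit in $k$ preserves the inequalities, yielding
\[ \max\{\beta({\sf T}), \gamma({\sf T})\} \leq \alpha({\sf T}) \leq \beta({\sf T})\, \gamma({\sf T}). \]
If $\beta({\sf T}) = \gamma({\sf T}) = 1$, the right-hand bound forces $\alpha({\sf T}) \leq 1$, while $\alpha \geq 1$ always, so $\alpha({\sf T}) = 1$. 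Conversely, if $\alpha({\sf T}) = 1$, the left-hand bound forces $\max\{\beta({\sf T}), \gamma({\sf T})\} \leq 1$, and since each constant is at least $1$ by definition, both equal $1$.

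The one technical point worth checking is that the limit $\alpha({\sf T}) = \lim_k \alpha_k({\sf T})$ genuinely exists and that the bounds survive the limit; but this is immediate, since each of $\alpha_k$, $\beta_k$, $\gamma_k$ is non-increasing in $k$ and bounded below by $1$, so all three limits exist, and (\ref{eq:trivial_bounds}) is stable under taking limits termwise. I expect no real obstacle here, as the corollary is essentially a bookkeeping combination of the two theorems together with the elementary multiplicative bound relating $\alpha$ to $\beta$ and $\gamma$. The mild subtlety is ensuring the Hausdorff-distance reformulation $(2)$ is phrased so that the two inner convergences and the convergence of the gap are genuinely equivalent; the sandwiching inclusion makes this routine, and one simply invokes that for nested sets $\mathcal{A} \subseteq \mathcal{B} \subseteq \mathcal{C}$ one has $\max\{\text{dist}(\mathcal{A},\mathcal{B}), \text{dist}(\mathcal{B},\mathcal{C})\} \leq \text{dist}(\mathcal{A},\mathcal{C})$ and $\text{dist}(\mathcal{A},\mathcal{C}) \leq \text{dist}(\mathcal{A},\mathcal{B}) + \text{dist}(\mathcal{B},\mathcal{C})$.
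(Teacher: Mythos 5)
Your argument is correct and is exactly the intended one: the paper states this corollary without proof as an immediate consequence of Theorems \ref{thm:lifting_gamma} and \ref{thm:exactness_beta}, and your assembly via the sandwich $\mathcal{W}({\sf T}^{k\text{-min}}) \subseteq \mathcal{W}({\sf T}) \subseteq \mathcal{W}({\sf T}^{k\text{-max}})$ together with the limit of the bounds in (\ref{eq:trivial_bounds}) is precisely the bookkeeping the authors had in mind.
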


Following \cite[\S 8.2]{Lupini-limits} and \cite[\S 2]{Gol-Sing}, we note that an operator system is $1$-exact if and only if it is $1$-exact as an operator space \cite[Proposition 5.5]{KPTT}, and hence approximation results for operator spaces carry over to the operator system case. For a fixed dimension, the $1$-exact operator spaces are the closure of the matricial operator spaces in a cb Hausdorff distance, so for any finite-dimensional $1$-exact operator system ${\cl S}_{\sf T}$, there is a one-to-one, completely contractive map $\phi: {\cl S}_{\sf T} \to M_n$ with $||\phi^{-1}||_{cb}$ close to $1$. The map as stated is not guaranteed to be unital, but unitization results such as \cite[Lemma 8.6]{Lupini-limits} or \cite[Lemma 4.3]{Gol-Lup} allow one to unitize the map. For the specific claims we need, we find that it is easier to derive approximation properties from scaling conditions in matrix convex sets.

\begin{cor}\label{cor:exactness_finite}
If ${\sf T} \in B({\cl H})^d$, then items $(1)$-$(3)$ are equivalent. If $0$ is in the interior of $\mathcal{W}_1({\sf T})$, then they are also equivalent to $(3^\prime)$.
\begin{enumerate}
\item $\mathcal{S}_{\sf T}$ is $1$-exact.
\item For any $\varepsilon > 0$, there is a one-to-one UCP map $\phi$ from ${\cl S}_{\sf T}$ into some $M_n$ such that $||\phi^{-1}||_{cb} < 1 + \varepsilon$. 
\item For any $\varepsilon > 0$, there exists a matrix tuple ${\sf A}$ such that $\emph{dist}(\mathcal{W}({\sf T}), \mathcal{W}({\sf A})) < \varepsilon$.\item [$(3^\prime)$] For any $0 < \varepsilon < 1$, there exists a matrix tuple ${\sf A}$ such that $(1 - \varepsilon) \mathcal{W}({\sf A}) \subseteq \mathcal{W}({\sf T}) \subseteq (1 + \varepsilon) \mathcal{W}({\sf A})$.
\end{enumerate}
\end{cor}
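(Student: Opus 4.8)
The plan is to prove the equivalence of items $(1)$, $(2)$, $(3)$, and (under the interior-point hypothesis) $(3^\prime)$ by establishing a cycle of implications, leaning heavily on Theorem \ref{thm:exactness_beta} to handle the abstract $1$-exactness and on the operator space characterization discussed in the remarks preceding this corollary. First I would observe that $(1) \iff (2)$ is essentially the operator space characterization of $1$-exactness already recalled in the text: a finite-dimensional operator system is $1$-exact precisely when it is $1$-exact as an operator space, and the $1$-exact operator spaces of a fixed dimension are the cb-Hausdorff closure of the matricial ones, so there is a one-to-one completely contractive $\phi \colon \mathcal{S}_{\sf T} \to M_n$ with $\|\phi^{-1}\|_{cb}$ close to $1$. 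The only subtlety is unitality; I would invoke the unitization results \cite[Lemma 8.6]{Lupini-limits} or \cite[Lemma 4.3]{Gol-Lup} to promote the completely contractive map to a UCP map while controlling $\|\phi^{-1}\|_{cb}$, which is exactly what item $(2)$ demands.

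Next I would connect $(2)$ to $(3)$ through the dictionary between UCP maps and matrix range inclusions from \cite[Theorem 5.1]{DDSS}. A one-to-one UCP map $\phi \colon \mathcal{S}_{\sf T} \to M_n$ with $\|\phi^{-1}\|_{cb} < 1 + \varepsilon$ corresponds, after recording the images ${\sf A} = \phi({\sf T})$, to the inclusions $\mathcal{W}({\sf A}) \subseteq \mathcal{W}({\sf T})$ together with a scaled reverse inclusion governed by $\|\phi^{-1}\|_{cb}$. Converting these two-sided scaled inclusions into a Hausdorff distance estimate is exactly what Proposition \ref{prop:precise_dist} accomplishes, provided a uniform norm bound and (for the reverse direction) control of a small ball inside the first levels. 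This gives $(2) \implies (3)$, and the same dictionary run backwards, using that $\|\phi^{-1}\|_{cb}$ is read off from how much one must dilate $\mathcal{W}({\sf A})$ to contain $\mathcal{W}({\sf T})$, gives $(3) \implies (2)$.

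Then I would address $(3) \iff (3^\prime)$ under the interior-point hypothesis. The implication $(3^\prime) \implies (3)$ is immediate from Proposition \ref{prop:precise_dist} (the scaled sandwich yields a small Hausdorff distance once the elements are uniformly bounded). The reverse $(3) \implies (3^\prime)$ is the second half of Proposition \ref{prop:precise_dist}: since $0$ lies in the interior of $\mathcal{W}_1({\sf T})$, a ball $\mathbb{B}_\delta$ sits inside the first level, and a Hausdorff approximation within $\varepsilon$ upgrades to a scaled sandwich $(1-\varepsilon')\mathcal{W}({\sf A}) \subseteq \mathcal{W}({\sf T}) \subseteq (1+\varepsilon')\mathcal{W}({\sf A})$; the two-sided normalization follows from $c\,\mathcal{W}({\sf A}) = \mathcal{W}(c\,{\sf A})$, so one is free to rescale ${\sf A}$ to center the estimate. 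Alternatively, I would close the whole loop abstractly by tying $(3)$ to Theorem \ref{thm:exactness_beta}: a matrix tuple ${\sf A}$ with $\mathcal{W}({\sf A})$ Hausdorff-close to $\mathcal{W}({\sf T})$ gives, via sandwiching by $\mathcal{W}({\sf T}^{k\text{-min}})$, that $\beta({\sf T}) = 1$, and conversely $\beta({\sf T}) = 1$ supplies approximants ${\sf A}$ drawn from finite convex combinations of points of $\mathcal{W}_k({\sf T})$ as in the proof of Corollary \ref{cor:lifting_finite}.

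The main obstacle I anticipate is the unitality bookkeeping in $(1) \iff (2)$: the operator space approximation theory produces only a completely contractive isomorphism onto its image, and forcing it to be unital while preserving a tight bound on $\|\phi^{-1}\|_{cb}$ requires care. The cited unitization lemmas do the work, but one must check that the resulting unital map still targets a matrix algebra $M_n$ (possibly of larger size) and still satisfies $\|\phi^{-1}\|_{cb} < 1 + \varepsilon$ after the construction. Once this is handled, the remaining implications are routine applications of the DDSS dictionary and Proposition \ref{prop:precise_dist}.
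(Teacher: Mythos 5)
Your overall architecture is workable, but it diverges from the paper at the one step that carries the real content, and your description of that step hides a gap. The paper proves the cycle $(1) \implies (3^\prime) \implies (2) \implies (1)$ (plus $(3) \iff (3^\prime)$ from Proposition \ref{prop:precise_dist}), and the load-bearing step is $(3^\prime) \implies (2)$: given ${\sf A} \in \cW_n({\sf T})$ with $\cW({\sf A}) \subseteq \cW({\sf T}) \subseteq (1+\varepsilon)\cW({\sf A})$, the cb norm of $\phi^{-1}$ is controlled by writing $\phi^{-1} = (1+\varepsilon)\psi - \varepsilon\, \tau \otimes I$, where $\psi({\sf A}) = \frac{1}{1+\varepsilon}{\sf T}$ is the UCP map supplied by the inclusion and $\tau$ is a state annihilating ${\sf A}$ (available because $0 \in \cW_1({\sf A})$ after recoordinatization). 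This is precisely the device that lets the authors avoid the operator-space approximation theory: the surrounding remark in the paper cites \cite[Lemma 8.6]{Lupini-limits} and \cite[Lemma 4.3]{Gol-Lup} only to explain that the unitization route is \emph{not} the one taken. Your primary plan for reaching $(2)$ is that unitization route; it is plausibly correct but reintroduces exactly the bookkeeping the paper is designed to sidestep, and you would still need to verify that the unitized map lands in a matrix algebra with $||\phi^{-1}||_{cb}$ still below $1 + \varepsilon$.

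The concrete gap is in your claim that the dictionary of \cite[Theorem 5.1]{DDSS} converts back and forth between $||\phi^{-1}||_{cb} < 1+\varepsilon$ and a scaled inclusion of matrix ranges. That dictionary concerns \emph{UCP} maps of the form $I \mapsto I$, $A_i \mapsto \frac{1}{C}T_i$. The map $\phi^{-1}$ is unital but not completely contractive, and any scalar multiple of it fails to be unital, so neither direction is a matter of ``reading off.'' Passing from the inclusion $\cW({\sf T}) \subseteq (1+\varepsilon)\cW({\sf A})$ to a bound on $||\phi^{-1}||_{cb}$ requires the correction term $-\varepsilon\,\tau\otimes I$ above (yielding $1+2\varepsilon$, not $1+\varepsilon$); passing from a cb-norm bound on $\phi^{-1}$ to a scaled inclusion would require a separate perturbation result asserting that a unital map with cb norm near $1$ is cb-close to a UCP map, which appears nowhere in the paper and is not needed: $(2) \implies (1)$ is immediate because UCP maps are complete contractions, so $(2)$ witnesses $1$-exactness of $\cl S_{\sf T}$ as an operator space via \cite{Pisier} and hence as an operator system via \cite[Proposition 5.5]{KPTT}. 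Your fallback of closing the loop through Theorem \ref{thm:exactness_beta} — obtaining $(1) \implies (3^\prime)$ from $\beta({\sf T})=1$ by approximating $\cW_k({\sf T})$ with finitely many points and taking direct sums, and $(3) \implies (1)$ by sandwiching with $\cW({\sf T}^{k\text{-min}})$ — is sound and is essentially what the paper does for those legs; if you adopt it, the only remaining repair is to replace your $(3) \implies (2)$ by the explicit $\tau$-trick.
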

\begin{proof}
If $0$ is in the interior of $\mathcal{W}_1({\sf T})$, then the equivalence ${\color{blue} (3) \iff (3^\prime)}$ follows from Proposition \ref{prop:precise_dist}. In particular, note that since Euclidean space is finite-dimensional, one may easily arrange for $0$ in the interior of $\mathcal{W}_1({\sf A})$ as well. Once again, we recoordinatize so that we may continue to use the interior assumption throughout.

{\color{blue} $(1) \implies (3^\prime)$}. We have that $\beta({\sf T}) = 1$, so given $\varepsilon > 0$, choose $k$ such that $\beta_k({\sf T}) < 1 + \varepsilon$. Now, $\mathcal{W}_k({\sf T})$ may be approximated (in a scaling sense) to arbitrary precision by the convex hull of finitely many points. The direct sum of these points produces a matrix tuple ${\sf A}$ such that
\[ (1 - \varepsilon) \mathcal{W}({\sf T}^{k\text{-min}}) \subseteq  \mathcal{W}({\sf A}) \subseteq \mathcal{W}({\sf T}^{k\text{-min}}) \]
and hence
\[ \mathcal{W}({\sf A}) \subseteq \mathcal{W}({\sf T}) \subseteq \cfrac{1 + \varepsilon}{1 - \varepsilon} \, \mathcal{W}({\sf A}). \]

{\color{blue} $(3^\prime) \implies (2)$}. We may rescale and suppose that some ${\sf A} \in \mathcal{W}_n({\sf T})$ has $\mathcal{W}({\sf A}) \subseteq \mathcal{W}({\sf T}) \subseteq (1 + \varepsilon) \mathcal{W}({\sf A})$. Consider the UCP map $\phi: \mathcal{S}_{\sf T} \to \mathcal{S}_{\sf A} \subseteq M_n$ defined by $\phi({\sf T}) = {\sf A}$. Similarly, consider the UCP map $\psi: {\cl S}_{\sf A} \to {\cl S}_{\sf T}$ defined by $\psi({\sf A}) = \frac{1}{1 + \varepsilon} {\sf T}$. The cb norm of $\phi^{-1}$ may be estimated by modifying the scale of the unit\footnote{We thank Adam Dor-On for showing us a similar method in a private communication.}. Namely, from our choice of coordinate system, there is a UCP map $\tau: \mathcal{S}_{\sf A} \to \mathbb{C}$ which annihilates ${\sf A}$. We then have that
\[ \phi^{-1} = (1 + \varepsilon) \psi - \varepsilon \, \tau \otimes I, \]
so $||\phi^{-1}||_{cb} \leq 1 + 2 \varepsilon$.

{\color{blue} $(2) \implies (1)$}.  A finite-dimensional operator space $X$ is $1$-exact by definition (see \cite[(17.4)' and p. 288]{Pisier}) precisely when the infimum of $||\phi||_{cb} \, ||\phi^{-1}||_{cb}$, ranging over all complete isomorphisms $\phi$ between $X$ and subspaces of matrix algebras, is $1$. In particular, since UCP maps are completely contractive, condition $(2)$ implies that $\mathcal{S}_{\sf T}$ is $1$-exact as an operator space. However, by \cite[Proposition 5.5]{KPTT}, we must also have that $\mathcal{S}_{\sf T}$ is $1$-exact as an operator system.
\end{proof}
\begin{remark}
Condition $(2)$ may suggest that $\mathcal{S}_{\sf T}$ should be completely order embedded into a nuclear unital $C^*$-algebra. This is not immediate, and results such as \cite[Corollary 18]{Ki-Wa} for separable nuclear operator systems suggest this might not be possible. One characterization of when an operator system can be embedded into $\mathcal{O}_2$ is given in \cite{Luthra} in terms of the $C^*$-envelope. However, we note that it is always possible to embed $\emph{OMIN}_k({\cl S}_{\sf T})$ into a (specific) nuclear $C^*$-algebra. Let
\[X_k = \{ \Phi \, \vert \, \Phi: \cl S \to M_k \,\, \emph{is UCP} \} \]
denote the {\it matrix state space} and let  
$C(X_k ; M_k) \equiv C(X_k) \otimes M_k$ 
be the $C^*$-algebra of continuous functions from $X_k$ into $M_k$. Further, let $\widehat{T_i}(k): X_k \to M_k$ be defined by
\[ \widehat{T_i}(k)(\Phi) = \Phi(T_i). \] 
It is easily seen that if we set $R_i = \bigoplus\limits_{k=1}^K \widehat{T_i}(k) \in \bigoplus\limits_{k=1}^K C(X_k ; M_k)$ and ${\sf R} = (R_1, \ldots, R_d)$, then
\[ {\cl W}({\sf T}^{\emph{k-min}}) = \cl W(\sf R).\]
Thus, there is a concrete embedding of $\emph{OMIN}_k(\cl S_{\sf T})$ into a nuclear $C^*$-algebra.

Contrast with \cite[Theorem 1.3]{Lupini-limits}, which shows that every separable $1$-exact operator system embeds into a single nuclear operator system, called the noncommutative Poulsen simplex. That result is an analogue of Kirchberg's result that every exact $C^*$-algebra embeds into $\cl O_2$. Note that in \cite{Lupini-limits}, $1$-exact operator systems are called \lq\lq exact".
\end{remark}

Similar to the case of free spectrahedra, since $c \, \mathcal{W}({\sf A}) = \mathcal{W}(c \, {\sf A})$, the approximation of $\mathcal{W}({\sf T})$ by $\mathcal{W}({\sf A})$ may be done from above or from below, as needed. Applying our results to the free group gives the following. 

 \begin{cor}\label{cor:free_unitaries} Fix $n \geq 2$, let $\bb F_n$ be the free group on $n$ generators, let $C^*(\bb F_n)$ be the full group $C^*$-algebra, and let $U_1, U_2, \ldots, U_n$ denote the  unitary operators corresponding to the generators of $\bb F_n$.  If we set ${\sf U} = (U_1, U_2, \ldots, U_n)$, then $\beta({\sf U}) > 1$.
 \end{cor}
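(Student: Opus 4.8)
The plan is to reduce $\beta({\sf U}) > 1$ to the failure of $1$-exactness for $\cl S_{\sf U}$ via Theorem \ref{thm:exactness_beta}, and to trace that failure to the non-exactness of $C^*(\bb F_n)$ as seen through its generators. First I would compute the matrix range explicitly: I claim $\cW_k({\sf U})$ is exactly the set of tuples $(A_1, \ldots, A_n) \in M_k^n$ with $\|A_i\| \le 1$ for all $i$. The inclusion $\subseteq$ is immediate, since a UCP map carries the unitaries $U_i$ to contractions. For $\supseteq$, given contractions $A_i \in M_k$ I would dilate each, on the common space $\bb C^k \oplus \bb C^k$, to its Halmos unitary $V_i \in M_{2k}$, so that $A_i$ is the compression of $V_i$ to the first summand; since $\bb F_n$ is free, the assignment $U_i \mapsto V_i$ extends to a $*$-representation $\rho$ of $C^*(\bb F_n)$, and compressing $\rho$ back to the first summand produces a UCP map sending $U_i \mapsto A_i$. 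In particular $\cW_1({\sf U})$ is the closed polydisc, so $0$ lies in the interior of $\cW_1({\sf U})$ and Theorem \ref{thm:exactness_beta} applies: it now suffices to prove that $\cl S_{\sf U}$ is not $1$-exact.

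Next I would unwind what $1$-exactness would entail. By Corollary \ref{cor:exactness_finite}, if $\cl S_{\sf U}$ were $1$-exact then for each $\varepsilon > 0$ there would be a one-to-one UCP map $\phi : \cl S_{\sf U} \to M_m$ with $\|\phi^{-1}\|_{cb} < 1 + \varepsilon$; equivalently, a single fixed matrix tuple ${\sf A} = (\phi(U_1), \ldots, \phi(U_n)) \in M_m^n$ whose matrix range approximates $\cW({\sf U})$ in the Hausdorff distance. Geometrically this asserts that the matrix range of one finite tuple reproduces the entire matricial polydisc --- all $n$-tuples of $k \times k$ contractions --- uniformly across every level $k$. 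Since UCP maps are completely contractive, it would force the operator-space exactness constant of $\cl S_{\sf U}$ to equal $1$, i.e. $\cl S_{\sf U}$ would be $1$-exact as an operator space in the sense of \cite[Proposition 5.5]{KPTT}.

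The crux, and the main obstacle, is to rule this out: the operator space spanned by $n \ge 2$ free unitaries is not $1$-exact. This is genuine input rather than formal manipulation, since every finite-dimensional operator space has a \emph{finite} exactness constant and non-exactness of $C^*(\bb F_n)$ alone does not pin down the generators. One must instead show that no fixed finite matrix tuple can reproduce, uniformly over all levels $k$, the norms $\|\sum_i C_i \otimes U_i\|_{C^*(\bb F_n)} = \sup\{\|\sum_i C_i \otimes V_i\| : V_i \text{ unitary}\}$, which is precisely the operator-space shadow of the non-exactness of $C^*(\bb F_n)$ for $n \ge 2$. I would invoke the operator space theory of free generators (the exactness computations and Haagerup-type estimates in \cite{Pisier}) to conclude that the exactness constant of $\cl S_{\sf U}$ exceeds $1$, which by the previous paragraph yields $\beta({\sf U}) > 1$. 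As a consistency check I would note the dual picture: contractions lift through any $C^*$-quotient, and since $\cW({\sf U})$ is the \emph{entire} matricial polydisc any contractive lift automatically yields a UCP map by \cite[Theorem 5.1]{DDSS}, so $\cl S_{\sf U}$ has the lifting property and $\gamma({\sf U}) = 1$ by Theorem \ref{thm:lifting_gamma}. Thus, for $n \ge 2$, $\cl S_{\sf U}$ has the lifting property while failing to be $1$-exact, and by the duality of \cite[Theorem 6.6]{Ka-nuc} this failure is equivalent to the loss of the lifting property for the (distinct) dual system.
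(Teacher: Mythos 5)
Your proof has the same skeleton as the paper's: observe that $0$ is interior to $\cW_1({\sf U}) = \overline{\mathbb{D}}^n$ and apply Theorem \ref{thm:exactness_beta}, so that everything reduces to the non-$1$-exactness of $\cl S_{\sf U}$. The paper disposes of that last step simply by citing \cite[Corollary 10.13]{Ka-nuc}, whereas you propose to derive it from operator-space exactness estimates for free generators in \cite{Pisier}, bridged back to operator systems via \cite[Proposition 5.5]{KPTT}. That route is legitimate, but it is the entire content of the corollary and you leave it as a pointer; if you pursue it, make sure the estimate you invoke applies to the \emph{unital} span inside the \emph{full} group $C^*$-algebra and is nontrivial at $n = 2$ --- the familiar bound $n/(2\sqrt{n-1})$ for the non-unital span of reduced free-group generators degenerates to $1$ there, and it is the unital version (of order $(n+1)/(2\sqrt{n})$) that is needed. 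Your explicit identification of $\cW_k({\sf U})$ with all $n$-tuples of $k \times k$ contractions, and the closing observation that $\cl S_{\sf U}$ nonetheless has the lifting property so that $\gamma({\sf U}) = 1$, are both correct and consistent with the examples in Section \ref{sec:products}, but neither is needed for the statement.
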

 \begin{proof}
The operator system ${\cl S}_{\sf U}$ is not $1$-exact by \cite[Corollary 10.13]{Ka-nuc}, and $\cW_1({\sf U}) = {\overline{\mathbb{D}}}^n$ certainly has $0$ in the interior. By Theorem \ref{thm:exactness_beta}, we must have that $\beta({\sf U}) > 1$.
 \end{proof}

The operator system ${\cl S}_{\sf U} = {\cl S}_{\sf U^{[n]}}$ of $n$ universal unitaries, or rather the corresponding matrix convex set $\mathcal{W}({\sf U}^{[n]})$ of $n$-tuples of matrix contractions, has been of interest in previous scaling problems. However, most results about the universal unitaries have estimated $\beta_1({\sf U}^{[n]})$ through the (non-)existence of commuting normal dilations. For example, \cite[Theorem 4.4]{Pas_SSM} is equivalent to the claim $\beta_1({\sf U}^{[n]}) \leq \sqrt{2n}$, and a lower bound $\beta_1({\sf U}^{[n]}) \geq \sqrt{n}$ follows from the self-adjoint case \cite[Theorem 6.7]{Pas-Sh-So}. For $n = 2$, the lower bound $\beta_1({\sf U}^{[2]}) \geq \sqrt{2}$ is not optimal as a consequence of \cite[Theorem 6.3 and Figure 1]{Ge-Sh}, which show $\beta_1({\sf U}^{[2]}) \geq 1.543$. The interested reader is referred to \cite[Part 3]{Shalit-tour} for a summary of such first-level dilation results. These estimates have also been recently refined in \cite[\S 3]{Ge-unitaries} using the reduced $C^*$-algebra and the corresponding free Haar unitaries. In contrast, our result claims that for each $n \geq 2$,
\[ \lim\limits_{k \to \infty} \beta_k({\sf U}^{[n]}) > 1, \]
but again, we do not know this precise value.

The operator system ${\cl T}_2 = {\cl S}_{\sf T}, \, {\sf T}=(E_{1,2}, E_{3,4})$ in equation (\ref{eq:bad_matrix}) and Corollary \ref{cor:matrix_units}, which does not have the lifting property, is the dual of $\mathcal{S}_{\sf U^{[2]}}$. In fact, this duality is how the non-lifting property of ${\cl T}_2$ is derived in \cite[Corollary 10.14]{Ka-nuc}. The bases are positioned so that  the polar dual of $\cW({\sf U^{[2]}})$ is $\cW(2 {\sf T})$, so in particular we have that $\beta({\sf U^{[2]}})  = \gamma(2 {\sf T}) = \gamma({\sf T})$. In the next section, we see how this information can be presented using products of matrix convex sets and $k$-minimal/$k$-maximal operator system structures.

Since $1$-exactness or the lifting property of a finite-dimensional operator system is detected based on a Hausdorff distance approximation property for the matrix convex set, the corresponding collections of matrix convex sets are evidently closed. This is similar to known claims about the Hausdorff topology for operator spaces (or systems) in the cb norm.

\begin{prop}\label{prop:closedness_of_ex_li}
Fix $d \in \mathbb{Z}^+$, and equip the closed and bounded matrix convex subsets of $\mathbb{M}^d$ with the Hausdorff topology. If
\[ \mathcal{E} := \{\mathcal{W}({\sf T}): {\sf T} \in B({\cl H})^d, \, \mathcal{S}_{\sf T} \emph{ is } 1\emph{-exact}\} \]
and
\[ \mathcal{L} := \{\mathcal{W}({\sf T}): {\sf T} \in B({\cl H})^d, \, \mathcal{S}_{\sf T} \emph{ has the lifting property}\},\]
then $\mathcal{E}$ and $\mathcal{L}$ are closed. A similar claim holds for $\mathbb{M}^d_{sa}$ and tuples of self-adjoints.
\end{prop}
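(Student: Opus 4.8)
The plan is to reduce closedness of $\mathcal{E}$ and $\mathcal{L}$ to the scaling characterizations already established in Theorem \ref{thm:exactness_beta} and Theorem \ref{thm:lifting_gamma}, combined with the Hausdorff-distance estimates of Proposition \ref{prop:precise_dist}. Fix a limit set $\mathcal{C} = \lim_j \mathcal{W}({\sf T}^{(j)})$ in the Hausdorff topology, where each $\mathcal{S}_{{\sf T}^{(j)}}$ is $1$-exact (respectively has the lifting property); I want to show $\mathcal{C} = \mathcal{W}({\sf T})$ for some ${\sf T}$ with $\mathcal{S}_{\sf T}$ again $1$-exact (respectively lifting). That $\mathcal{C}$ is itself a closed and bounded matrix convex set, hence equal to some $\mathcal{W}({\sf T})$ by \cite[Proposition 3.5]{DDSS}, should follow from the fact that the Hausdorff limit of matrix convex sets is matrix convex and that convergence is uniform across all levels; this is the first routine step.

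The central idea is to prove closedness via the external characterization $(3)$ of Corollary \ref{cor:exactness_finite} (for $\mathcal{E}$), namely that $\mathcal{S}_{\sf T}$ is $1$-exact if and only if $\mathcal{W}({\sf T})$ can be approximated to arbitrary Hausdorff precision by matrix convex sets $\mathcal{W}({\sf A})$ of matrix tuples. Given $\varepsilon > 0$, I would first choose $j$ large so that $\text{dist}(\mathcal{C}, \mathcal{W}({\sf T}^{(j)})) < \varepsilon/2$. Since $\mathcal{S}_{{\sf T}^{(j)}}$ is $1$-exact, Corollary \ref{cor:exactness_finite} supplies a matrix tuple ${\sf A}$ with $\text{dist}(\mathcal{W}({\sf T}^{(j)}), \mathcal{W}({\sf A})) < \varepsilon/2$. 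The triangle inequality for the Hausdorff distance then gives $\text{dist}(\mathcal{C}, \mathcal{W}({\sf A})) < \varepsilon$. As $\varepsilon > 0$ was arbitrary, $\mathcal{C} = \mathcal{W}({\sf T})$ satisfies condition $(3)$ of Corollary \ref{cor:exactness_finite}, whence $\mathcal{S}_{\sf T}$ is $1$-exact and $\mathcal{C} \in \mathcal{E}$. The key point is that this argument never passes through $0$ being an interior point: it uses only the coordinate-free equivalence $(1) \iff (3)$, so no recoordinatization of the limit is needed.

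For $\mathcal{L}$ the scheme is identical in spirit, but the external approximating objects are free spectrahedra rather than matrix ranges of tuples, and here the interior hypothesis demands care. The clean route is to appeal to Theorem \ref{thm:lifting_gamma}, condition $(3)$: $\mathcal{S}_{\sf T}$ has the lifting property if and only if $\mathcal{W}({\sf T}^{k\text{-max}})$ converges to $\mathcal{W}({\sf T})$ in Hausdorff distance, an intrinsic property. I would argue that the composite approximation $\mathcal{W}({\sf A}) \to \mathcal{W}({\sf T}^{(j)}) \to \mathcal{C}$, with $\mathcal{W}({\sf A})$ now the matrix range of a lifting-property tuple produced from $\mathcal{W}(({\sf T}^{(j)})^{k\text{-max}})$, forces $\gamma(\mathcal{C}) = 1$ in the limit; equivalently, the Hausdorff-distance approximations by $k$-maximal structures for the ${\sf T}^{(j)}$ survive passage to the limit. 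The self-adjoint case over $\mathbb{M}^d_{sa}$ is handled verbatim, using the self-adjoint polar dual and the self-adjoint versions of the cited results in place of their complex counterparts.

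I expect the main obstacle to be the interior-point bookkeeping for $\mathcal{L}$. The characterization $\gamma({\sf T}) = 1$ requires $0 \in \text{int}\,\mathcal{W}_1({\sf T})$, and while Theorem \ref{thm:lifting_gamma} shows conditions $(1)$--$(3)$ are coordinate-free and so $\gamma$ need not be used directly, one must verify that the Hausdorff approximation by $k$-maximal sets is genuinely preserved under the limit without implicitly requiring the limit to be interior-positioned. The cleanest way to sidestep this is to phrase everything through the intrinsic condition $(3)$ of each theorem and invoke only the triangle inequality, so that the interior hypotheses are confined to the approximants and never imposed on $\mathcal{C}$ itself; making this reduction precise, rather than the triangle-inequality estimate, is where the real work lies.
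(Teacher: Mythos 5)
Your argument for $\mathcal{E}$ is correct, but it is not the paper's route: you pass through the external characterization in Corollary \ref{cor:exactness_finite}(3) (Hausdorff approximability of $\mathcal{W}({\sf T})$ by matrix ranges of matrix tuples) and a triangle inequality, whereas the paper stays with the intrinsic condition (2) of Theorem \ref{thm:exactness_beta} and transfers the $k$-min approximation from the approximant to the limit directly: a point of $\mathcal{W}({\sf T})$ is within $\varepsilon$ of $\mathcal{W}({\sf Q})$, hence within $2\varepsilon$ of a matrix convex combination of points of $\mathcal{W}_k({\sf Q})$, and replacing each such point by a nearby point of $\mathcal{W}_k({\sf T})$ costs at most one more $\varepsilon$ because matrix convex combinations are contractive. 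The two arguments buy the same thing for $\mathcal{E}$; the paper's version has the advantage of isolating the property that makes the proof work, namely that $\Wmin{k}$ is $1$-Lipschitz for the Hausdorff metric, $\mathrm{dist}(\Wmin{k}(\mathcal{C}), \Wmin{k}(\mathcal{D})) \leq \mathrm{dist}(\mathcal{C}_k, \mathcal{D}_k)$. Both versions correctly keep all interior-point hypotheses off the limit set.

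For $\mathcal{L}$, however, you have not actually given a proof. The assertion that the composite approximation ``forces $\gamma(\mathcal{C}) = 1$ in the limit,'' equivalently that the Hausdorff approximations by $k$-maximal sets ``survive passage to the limit,'' is exactly the step that requires an argument, and it does not reduce to a triangle inequality: unlike $\Wmin{k}$, the operation $\Wmax{k}$ is defined by a containment condition ($\mathcal{W}_k(X) \subseteq \mathcal{C}_k$) and is not $1$-Lipschitz in the Hausdorff metric. Knowing $\mathcal{W}_k(X) \subseteq \mathcal{W}_k({\sf T})$ together with $\mathrm{dist}(\mathcal{W}_k({\sf T}), \mathcal{W}_k({\sf Q})) < \varepsilon$ only places $\mathcal{W}_k(X)$ in an $\varepsilon$-neighbourhood of $\mathcal{W}_k({\sf Q})$; converting that into $X \in (1 + C\varepsilon)\,\mathcal{W}({\sf Q}^{k\text{-max}})$ requires a ball $\mathbb{B}_\delta \subseteq \mathcal{W}_1({\sf Q})$ as in Proposition \ref{prop:precise_dist}, and no such $\delta$ can be fixed uniformly along a sequence whose limit may be degenerate --- a case the proposition must allow, as the remark following it emphasizes. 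For the same reason, invoking $\gamma(\mathcal{C}) = 1$ for the limit is not legitimate when $\mathcal{C}_1$ has empty interior. So the $\mathcal{L}$ half needs a different mechanism (for instance, working through the coordinate-free condition (2) of Theorem \ref{thm:lifting_gamma}, or a duality argument with uniform interior control), and your proposal stops precisely where that work begins.
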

\begin{proof}
We will consider $1$-exactness, as the lifting property is similar. Suppose $\mathcal{C} = \cW({\sf T})$ is a closed and bounded matrix convex set in the closure of $\mathcal{E}$. Fix $\varepsilon > 0$, and let ${\cl S}_{\sf Q}$ be $1$-exact and such that $\text{dist}(\mathcal{W}({\sf Q}), \cW({\sf T})) < \varepsilon$. 

By Theorem \ref{thm:exactness_beta}, $\cW({\sf Q}^{k\text{-min}})$ converges to $\cW({\sf Q})$ in the Hausdorff distance, so choose $k$ large enough that any $Y \in \cW({\sf Q})$ is within $\varepsilon$ of a matrix convex combination of points in $\mathcal{W}_k({\sf Q})$, say, $\sum\limits_{j=1}^n V_j^* X^{[i]} V_j$ for $X^{[1]}, \ldots, X^{[n]} \in \mathcal{W}_k({\sf Q})$. Each $X^{[i]}$ may be approximated within $\varepsilon$ by some element of $\cW_k({\sf T})$. Since matrix convex combinations are contractive, we have that every $Y \in \cW({\sf Q})$ is within $2 \varepsilon$ of some point in $\cW({\sf T}^{k\text{-min}})$. Consequently, every point of $\cW({\sf T})$ is within $3 \varepsilon$ of a point of $\cW({\sf T}^{k\text{-min}})$. 

We conclude that the Hausdorff distance between $\Wmin{k}(\mathcal{C})$ and $\mathcal{C}$ is less than $3 \varepsilon$, hence $\mathcal{W}({\sf T}^{k\text{-min}})$ converges to $\mathcal{W}({\sf T})$ in the Hausdorff distance as $k \to \infty$. It follows from Theorem \ref{thm:exactness_beta} again that $\mathcal{S}_{\sf T}$ is $1$-exact, i.e. $\mathcal{W}({\sf T}) \in \mathcal{E}$. 
\end{proof}

Note that the collections $\mathcal{E}$ and $\mathcal{L}$ include sets $\mathcal{W}({\sf T})$ where the first level $\mathcal{W}_1({\sf T})$ lives  in a proper subspace of $\mathbb{C}^d$, and similarly $\mathbb{R}^d$ for the self-adjoint case. While the other proofs in this section would immediately recoordinatize this operator system, that reduction was possible only because a single coordinate change/projection did not alter the nature of the approximation being made. However, it is imperative in Proposition \ref{prop:closedness_of_ex_li} that degenerate sets be allowed as limits.

We close this section with some comments about the Smith-Ward problem  \cite{SW}, a problem which concerns a single operator $T \in B({\cl H})$. One formulation of the problem asks, if $T \in B({\cl H})$ and $\pi$ is the quotient map onto the Calkin algebra, is there a compact perturbation $T + K$ such that $\mathcal{W}(T + K) = \mathcal{W}(\pi(T))$?

\begin{cor} The following are equivalent.
\begin{enumerate}
\item The Smith-Ward problem has an affirmative answer.
\item For every $T \in B(\cl H)$, $\cl S_T$ has the lifting property.
\item For every $T \in B(\cl H)$, $\cl S_T$ is $1$-exact.
\item For every $T \in B(\cl H)$, the Hausdorff distance between $\cl W(T^{k\emph{-min}})$ and $\cl W(T^{k\emph{-max}})$ approaches $0$ as $k$ approaches $+\infty$.
\item For every $T \in B(\cl H)$, $\cl W(T^{k\emph{-min}})$ converges to $\cl W(T)$ in the Hausdorff distance as $k$ approaches $+\infty$.
\item For every $T \in B(\cl H)$, $\cl W(T^{k\emph{-max}})$ converges to $\cl W(T)$ as $k$ approaches $+\infty$.
\item For every $T \in B(\cl H)$ with $0$ in the interior of $\cl W_1(T)$, $\alpha(T) =1$.
\item For every $T \in B(\cl H)$ with $0$ in the interior of $\cl W_1(T)$, $\beta(T) =1$.
\item For every $T \in B(\cl H)$ with $0$ in the interior of $\cl W_1(T)$, $\gamma(T) =1$.
\end{enumerate}
\end{cor}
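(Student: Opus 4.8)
The plan is to reduce the entire list to a single structural fact: the operator systems of the form $\mathcal{S}_T$ for a single operator $T$ are precisely the operator systems of dimension at most three. Indeed, $\mathcal{S}_T = \mathrm{span}\{I,T,T^*\}$ always has dimension at most $3$, and conversely any operator system of dimension at most $3$ has a self-adjoint basis $\{1,X,Y\}$, so that $T = X+iY$ realizes it as $\mathcal{S}_T$. The systems of dimension $1$ or $2$ are function systems, hence nuclear, and therefore automatically have the lifting property and are $1$-exact; thus for every item the content lies entirely in the three-dimensional case. Granting this, the equivalence $(1)\iff(2)$ is exactly \cite[Theorem 11.5]{Ka-nuc}, which says that the Smith--Ward problem is affirmative if and only if every $3$-dimensional operator system has the lifting property.

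Next I would dispose of the items that are merely pointwise restatements of the earlier results. Applying Theorem \ref{thm:lifting_gamma} to each $T$, the equivalence of its items $(1)$ and $(3)$ gives $(2)\iff(6)$; recoordinatizing each $\mathcal{S}_T$ so that $0$ is interior (the lifting property being intrinsic to the operator system) and invoking item $(3')$ gives $(2)\iff(9)$. In the same way Theorem \ref{thm:exactness_beta} yields $(3)\iff(5)\iff(8)$, and Corollary \ref{cor:both_alpha} shows that $(4)$ is equivalent to the statement that every $\mathcal{S}_T$ is both $1$-exact and has the lifting property, and likewise $(7)$ after recoordinatization.

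The crux is then $(2)\iff(3)$, which I would obtain from polar duality. For a single operator $T$, recoordinatize so that $0$ lies in the interior of $\mathcal{W}_1(T)$; by Proposition \ref{prop:polar_opsys_dual} with $d=1$ the dual system is again of single-operator type, $\mathcal{S}_T' = \mathcal{S}_Q$ with $\mathcal{W}(Q) = \mathcal{W}(T)^\circ$, and \cite[Theorem 6.6]{Ka-nuc} gives that $\mathcal{S}_T$ is $1$-exact if and only if $\mathcal{S}_Q$ has the lifting property. By the bipolar theorem this duality is an involution on the class of operator systems of dimension at most three, so as $T$ ranges over all single operators, so does $Q$. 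Consequently the universally quantified assertion ``every $\mathcal{S}_T$ is $1$-exact'' coincides with ``every $\mathcal{S}_Q$ has the lifting property,'' that is, $(3)\iff(2)$.

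Finally, since $(2)\iff(3)$, the conjunction appearing in items $(4)$ and $(7)$---that each $\mathcal{S}_T$ be simultaneously $1$-exact and lifting---reduces to $(2)$ alone, closing the list of equivalences. I expect the only real friction to be bookkeeping: tracking the interior/recoordinatization hypotheses so as to pass cleanly between the intrinsic conditions $(1)$--$(6)$ and the coordinate-dependent constants in $(7)$--$(9)$, and checking that duality genuinely permutes the single-operator systems (so that a ``for all $T$'' statement is preserved) rather than shrinking the class.
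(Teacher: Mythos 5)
Your proof is correct and takes essentially the same route as the paper: reduce to the three-dimensional case where $0$ can be placed in the interior of $\mathcal{W}_1(T)\subseteq\mathbb{C}$ (handling dimensions $1$ and $2$ directly), cite \cite[Theorem 11.5]{Ka-nuc} for the Smith--Ward equivalence, and obtain the remaining items from Theorems \ref{thm:lifting_gamma} and \ref{thm:exactness_beta} and Corollary \ref{cor:both_alpha}. The only difference is that you spell out $(2)\iff(3)$ via the duality involution (\cite[Theorem 6.6]{Ka-nuc} together with the bipolar theorem), which the paper delegates to the comments following Kavruk's theorem and which is the same mechanism already used in the proof of Theorem \ref{thm:exactness_beta}.
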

\begin{proof} The equivalence of (1), (2), and (3) is given by \cite[Theorem~11.5]{Ka-nuc} and the comments immediately thereafter. In particular, note that if the dimension of $\mathcal{S}_{\sf T}$ is less than $3$, the statement may be directly verified with a recoordinatization and a reduction to the self-adjoint case. Equivalently, one only needs to consider $T$ such that $0$ is an interior point of ${\cl W}_1(T) \subseteq \mathbb{C}$. The remaining equivalences follow from Theorems \ref{thm:lifting_gamma} and \ref{thm:exactness_beta} as well as Corollary \ref{cor:both_alpha}.
\end{proof}

The equivalence of (1) and (6) also follows from \cite[Theorem~3.15]{Pa-1982}.

\section{Sums and products}\label{sec:products}

Given two operator systems $\cl S$ and $\cl T$, there are two natural ways to form a direct sum operator system, denoted $\oplus$ and $\oplus_1$.  If $\cl S \subseteq B(\cl H)$ and $\cl T \subseteq B(\cl K),$ then $\cl S \oplus \cl T$ is the quite natural operator system on $B(\cl H \oplus \cl K)$ given by
\[ \cl S \oplus \cl T = \left\{ \begin{pmatrix} S & 0 \\ 0 & T \end{pmatrix} \in B(\cl H \oplus \cl K) : S \in \cl S, \, T \in \cl T \right\}.\]
However, the units of $\cl S$ and $\cl T$ are distinct from the unit of $\cl S \oplus \cl T$. If $\phi: \cl S \oplus \cl T \to B(\cl H^{\prime})$ is a UCP map, then all that can be said is that $\phi(I_{\cl S} \oplus 0) = P \ge 0$ and $\phi(0 \oplus I_{\cl T}) = Q \ge 0$ with $P+Q= I_{\cl H^{\prime}}$. 

It is easy to check that if $\phi: \cl S \to B(\cl H)$ is a completely positive map with $\phi(I) = P$, then there exists a UCP map $\phi_1: \cl S \to B(\cl H)$ and an operator $A \in B(\cl H)$ with $A^*A=P$ such that $\phi(S) = A^*\phi_1(S) A$.  Thus, every UCP map $\gamma: \cl S \oplus \cl T \to B(\cl H)$ has the form
\[ \gamma( S \oplus T) = A^* \phi_1(S) A + B^* \psi_1(T)B,\]
where $A, B \in B(\cl H)$ satisfy $A^*A+B^*B= I$, and both $\phi_1: \cl S \to B(\cl H)$ and $\psi_1: \cl T \to B(\cl H)$ are UCP maps.

Now, if ${\sf T} = (T_1,\ldots,T_d)$ and ${\sf R}=(R_1,\ldots, R_k)$, then the corresponding direct sum in $\cl S_{\sf T} \oplus \cl S_{\sf R}$ is a $(d+k)$-tuple of operators, which we denote as
\begin{equation} \label{eq:direct_sum_op} {\sf T}  \, \smboxplusraised \, {\sf R} :=(T_1 \oplus 0,\ldots, T_d \oplus 0, 0 \oplus R_1, \ldots, 0 \oplus R_k). \end{equation}

The above facts then show that
\[ \mathcal{W}_n({\sf T} \, \smboxplusraised \, {\sf R}) = \{ (A^* \, {\sf X} \, A, B^* \, {\sf Y} \, B): A,B \in M_n, \, A^*A+B^*B = I_n, \, {\sf X} \in \cW_n({\sf T}), {\sf Y} \in \cW_n({\sf T})\}. \]
Equivalently, $\mathcal{W}({\sf T} \, \smboxplusraised \, {\sf R})$ is the matrix convex hull of tuples $({\sf X}, 0)$ and $(0, {\sf Y})$ for ${\sf X} \in \mathcal{W}({\sf T})$ and ${\sf Y} \in \mathcal{W}({\sf T})$, where the tuples of zeroes are of the appropriate matrix dimensions and lengths.

The notion of operator system direct sum, $\oplus_1$, was developed in  \cite{KPTT}. In particular, in ${\cl S} \oplus_1 {\cl T}$, the unit is simultaneously equal to the units of ${\cl S}$ and ${\cl T}$. The key properties of this system that we shall use are that there exist UCP inclusion maps, $\iota_1: \cl S \to \cl S \oplus_1 \cl T$ and $\iota_2: \cl T \to \cl S \oplus_1 \cl T$ such that  $\gamma: \cl S \oplus_1 \cl T \to B(\cl H)$ is UCP if and only if there exist UCP maps $\phi: \cl S \to B(\cl H)$ and $\psi: \cl T \to B(\cl H)$ such that $\gamma(\iota_1(S) +  \iota_2(T)) = \phi(S) + \psi(T)$. The operator system was defined as a quotient using the abstract theory of operator systems, so even if one is given concrete representations of $\cl S$ and $\cl T$ on Hilbert spaces, it is not so clear how to represent $\cl S \oplus_1 \cl T$ as operators on a Hilbert space. Consider, for example, the $\oplus_1$ of a universal unitary operator system with itself $n$ times, as in Corollary \ref{cor:free_unitaries}.  However, this construction still has many nice properties.

Given ${\sf T}= (T_1,\ldots,T_d)$ and ${\sf R}=(R_1,\ldots,R_k)$ and corresponding operator systems $\cl S_{\sf T}$ and $\cl S_{\sf R}$, we set
\[ {\sf T} \, \smboxplusraised_1 \, {\sf R} = ( \iota_1(T_1), \ldots, \iota_1(T_d), \iota_2(R_1),\ldots, \iota_2(R_k)),\] where the $(d+k)$-tuple is inside the operator system $\cl S_{\sf T} \oplus_1 \cl S_{\sf R}$. The operator system generated by ${\sf T} \, \smboxplusraised_1 \, {\sf R}$ is unitally completely order isomorphic to $\cl S_{\sf T} \oplus_1 \cl \cl S_{R}$.
It follows that the $n$th level of the matrix range is a Cartesian product,
\[ \cl W_n({\sf T} \, \smboxplus_1 \, {\sf R}) = \{({\sf X}, {\sf Y}): {\sf X} \in \cW_n({\sf T}), {\sf Y} \in \cW_n({\sf R})\}, \]
and hence the matrix range is a levelwise Cartesian product.

\begin{defn} Given matrix convex sets $\cl C = \cup_n \cl C_n$ on $\bb C^d$ and $\cl D = \cup_n \cl D_n$ on $\bb C^k$, define the matrix convex set $\mathcal{C} \times \mathcal{D}$ levelwise by
\[ (\mathcal{C} \times \mathcal{D})_n :=  \{({\sf X}, {\sf Y}): {\sf X} \in \mathcal{C}_n, {\sf Y} \in \mathcal{D}_n\}. \]

\vspace{.1 in}

\noindent Similarly, let $\mathcal{C} \times_1 \mathcal{D}$ denote the matrix convex hull of all tuples $({\sf X}, {\sf 0})$ and $({\sf 0}, {\sf Y})$ where ${\sf X} \in \mathcal{C}$ and ${\sf Y} \in \mathcal{D}$. Equivalently, for each $n$,
\[ (\mathcal{C} \times_1 \mathcal{D})_n := \{(A^* {\sf X} A, B^* {\sf Y} B): A, B \in M_n, A^*A + B^*B = I_n, {\sf X} \in \mathcal{C}_n, {\sf Y} \in \mathcal{D}_n\}. \]
\end{defn}

\vspace{.2 in}

Based on the previous discussion, we have that
\[ \cl W({\sf T} \, \smboxplus \, {\sf R}) = \cl W({\sf T}) \times_1 \cl W({\sf R})\]
and
\[ \cl W({\sf T} \, \smboxplus_1 \, {\sf R}) = \cl W({\sf T}) \times \cl W({\sf R})\]
whenever ${\sf T}$ and ${\sf R}$ are tuples of bounded operators. In particular, the examples of \cite[\S 10]{Ka-nuc} fit nicely in this presentation.

\begin{exam}
Consider twice the $2 \times 2$ matrix unit, $Z = 2 E_{1,2} = \begin{pmatrix} 0 & 2 \\ 0 & 0 \end{pmatrix}$, so that $\mathcal{W}(Z) = \Wmax{1}(\overline{\mathbb{D}})$. This is a consequence of Ando's characterization \cite{An} of operators of numerical radius 1; it is alternatively the polar dual of \cite[Proposition 14.14]{HKMS}. Then
\[ \cW(Z \, \smboxplus \, Z) =  \cW(2 \, E_{1,2}, 2 \, E_{3,4}) = \Wmax{1}(\overline{\mathbb{D}}) \times_1 \Wmax{1}(\overline{\mathbb{D}}), \]
where we note that $Z \smboxplus Z$ is a pair of $4 \times 4$ matrices. From Corollary \ref{cor:matrix_units}, we have that $\gamma(Z \, \smboxplus \, Z) > 1$, even though $\gamma(Z) = 1$ (as $\cW(Z)$ is a $\Wmax{1}$-set). 
\end{exam} 

The above may be extended to more than two summands, and a similar decomposition occurs for the universal unitaries.

\begin{exam}
Let $\sf U^{[n]}$ be the $n$-tuple which contains the unitary generators of $C^*(\mathbb{F}_n)$. Then ${\sf U}^{[n]}$ may be viewed as
\[ {\sf U}^{[n]} = N \, \smboxplusraised_1 \, N \, \smboxplusraised_1 \, \cdots \, \smboxplusraised_1 \, N \]
where $N = {\sf U}^{[1]}$ is a single unitary operator with $\sigma(N) = \mathbb{S}^1$. From \cite[Theorem 2.7 and Corollary 2.8]{DDSS} and the above, we see that
\[ \cW({\sf U}^{[n]}) = \Wmin{1}(\overline{\mathbb{D}}) \times \Wmin{1}(\overline{\mathbb{D}}) \times \cdots \times \Wmin{1}(\overline{\mathbb{D}}). \]
From Corollary \ref{cor:free_unitaries}, we have that $\beta({\sf U}^{[n]}) > 1$ for $n \geq 2$, even though $\beta({\sf U}^{[1]}) = 1$ (as $\cW({\sf U}^{[1]})$ is a $\Wmin{1}$-set). 
\end{exam}

Note that the product on the right hand side is precisely the one studied in \cite{Pas_SSM} in order to estimate 
\[ \sqrt{n} \leq \beta_1({\sf U}^{[n]}) \leq \sqrt{2n}. \]
The operator systems above are dual to each other, and similarly the matrix convex sets are polar duals. It is therefore not surprising that the polar dual switches the two product operations.

\begin{prop}
Let $\mathcal{C}$ and $\mathcal{D}$ be closed and bounded matrix convex sets over complex Euclidean space, with $0$ in the interior of $\mathcal{C}_1$ and $\mathcal{D}_1$. Then

\[ (\mathcal{C} \times \mathcal{D})^\circ = \mathcal{C}^\circ \times_1 \mathcal{D}^\circ\]
and
\[ (\mathcal{C} \times_1 \mathcal{D})^\circ = \mathcal{C}^\circ  \times \mathcal{D}^\circ. \]
If instead $\mathcal{C}$ and $\mathcal{D}$ are closed and bounded matrix convex sets of self-adjoints, with $0$ in the interior of $\mathcal{C}_1$ and $\mathcal{D}_1$ when viewed as subsets of real Euclidean space, then analogous claims hold for the self-adjoint polar dual.
\end{prop}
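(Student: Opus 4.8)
The plan is to establish the ``coproduct'' identity $(\mathcal{C} \times_1 \mathcal{D})^\circ = \mathcal{C}^\circ \times \mathcal{D}^\circ$ by a direct computation, and then deduce the ``product'' identity $(\mathcal{C} \times \mathcal{D})^\circ = \mathcal{C}^\circ \times_1 \mathcal{D}^\circ$ formally from it via the bipolar theorem. Write a generic tuple in $\mathbb{M}^{d+k}$ as a pair $(A,B)$ with $A$ a $d$-tuple and $B$ a $k$-tuple at a common matrix level $n$, and for $W = ({\sf V},{\sf W}) \in \mathbb{M}^{d+k}$ set $(A,B)\cdot W = \sum_i A_i \otimes V_i + \sum_j B_j \otimes W_j$. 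The single observation driving everything is that the polar condition is invariant under matrix convex combinations: if $Z = \sum_l V_l^* W^{(l)} V_l$ with $\sum_l V_l^* V_l = I$, then $(A,B)\cdot Z = \sum_l (I\otimes V_l)^*\big[(A,B)\cdot W^{(l)}\big](I\otimes V_l)$ with $\sum_l (I\otimes V_l)^*(I\otimes V_l) = I$, so $\mathrm{Re}\big((A,B)\cdot W^{(l)}\big)\le I$ for all $l$ forces $\mathrm{Re}\big((A,B)\cdot Z\big)\le I$. Consequently the polar dual of a set coincides with the polar dual of its matrix convex hull, and the polar dual of a union is the intersection of the polar duals.

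With this in hand, recall that $\mathcal{C}\times_1\mathcal{D}$ is by definition the matrix convex hull of the tuples $({\sf X},0)$ with ${\sf X}\in\mathcal{C}$ and $(0,{\sf Y})$ with ${\sf Y}\in\mathcal{D}$. By the previous paragraph, $(A,B)\in(\mathcal{C}\times_1\mathcal{D})^\circ$ if and only if it lies in the polar dual of this generating set, i.e.\ if and only if $\mathrm{Re}\big(\sum_i A_i\otimes X_i\big)\le I$ for all ${\sf X}\in\mathcal{C}$ and $\mathrm{Re}\big(\sum_j B_j\otimes Y_j\big)\le I$ for all ${\sf Y}\in\mathcal{D}$, since the cross terms vanish against these generators. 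These two conditions are exactly $A\in\mathcal{C}^\circ$ and $B\in\mathcal{D}^\circ$, and as $A,B$ share the common level $n$, this says precisely that $(A,B)\in\mathcal{C}^\circ\times\mathcal{D}^\circ$. This proves the coproduct identity.

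For the product identity I would apply the coproduct identity to the pair $\mathcal{C}^\circ,\mathcal{D}^\circ$ in place of $\mathcal{C},\mathcal{D}$. Since $\mathcal{C},\mathcal{D}$ are closed and bounded with $0$ in the interior of their first levels, their polar duals are again closed and bounded with $0$ in the interior of the first level, so the bipolar theorem applies to them and gives $\mathcal{C}^{\circ\circ}=\mathcal{C}$, $\mathcal{D}^{\circ\circ}=\mathcal{D}$. The coproduct identity then yields $(\mathcal{C}^\circ\times_1\mathcal{D}^\circ)^\circ=\mathcal{C}^{\circ\circ}\times\mathcal{D}^{\circ\circ}=\mathcal{C}\times\mathcal{D}$; taking polar duals of both ends and applying the bipolar theorem on the left gives $\mathcal{C}^\circ\times_1\mathcal{D}^\circ=(\mathcal{C}\times\mathcal{D})^\circ$, as desired.

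The self-adjoint case is handled by repeating the same two steps verbatim with the self-adjoint polar dual, the only change being that the testing expression uses $\sum_i A_i\otimes X_i$ without the real part; the hull-invariance argument and the decoupling against the generators go through unchanged, and the self-adjoint bipolar theorem supplies the second identity. I expect the only genuine care to be needed in the tensor-leg bookkeeping of the conjugations $(I\otimes V_l)$ in the hull-invariance step and in confirming the interior/boundedness hypotheses required to invoke the bipolar theorem; the decoupling of cross terms against $({\sf X},0)$ and $(0,{\sf Y})$, which is the heart of the matter, is essentially immediate.
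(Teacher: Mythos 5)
Your proposal is correct and follows essentially the same route as the paper: both prove the identity $(\mathcal{C} \times_1 \mathcal{D})^\circ = \mathcal{C}^\circ \times \mathcal{D}^\circ$ directly by observing that the polar inequality is preserved under matrix convex combinations and hence need only be tested on the generators $({\sf X},0)$ and $(0,{\sf Y})$, where it decouples, and both obtain the companion identity from the bipolar theorem. Your write-up merely makes explicit the conjugation bookkeeping and the bipolar deduction that the paper leaves implicit.
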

\begin{proof}
 First, note that the $\times$ and $\times_1$ operations preserve the inclusion of the zero tuple (specifically, as an interior point). On account of the bipolar theorem \cite[Corollary 5.5]{EF}, it suffices to prove one equality. Let $\mathcal{C}$ consist of $n$-tuples and let $\mathcal{D}$ consist of $d$-tuples, and consider that $(\mathcal{C} \times_1 \mathcal{D})^\circ$ contains precisely the tuples ${\sf A} = (A_1, \ldots, A_{n+d})$ such that for all $X \in \mathcal{C} \times_1 \mathcal{D}$,
\begin{equation}\label{eq:polarlong} \text{Re}\left( \sum\limits_{k=1}^{n+d} A_k \otimes X_k \right) \leq I. \end{equation}
Now, $\mathcal{C} \times_1 \mathcal{D}$ is the matrix convex hull of tuples of the form $({\sf C}, {\sf 0})$ and $({\sf 0}, {\sf D})$, and the inequality in (\ref{eq:polarlong}) is preserved under matrix convex combinations. Thus, $X \in (\mathcal{C} \times_1 \mathcal{D})^\circ$ if and only if for all ${\sf C} \in \mathcal{C}$ and ${\sf D} \in \mathcal{D}$,
\[ \text{Re}\left( \sum\limits_{k=1}^n A_k \otimes C_k \right) \leq I \,\,\,\,\,\, \text{ and } \,\,\,\,\,\, \text{Re}\left( \sum\limits_{k=1}^d A_{n+k} \otimes D_k \right) \leq I. \]
This is equivalent to the claim that $(A_1, \ldots, A_n) \in \mathcal{C}^\circ$ and $(A_{n+1}, \ldots, A_{n+d}) \in \mathcal{D}^\circ$. That is, ${\sf A} \in \mathcal{C}^\circ \times \mathcal{D}^\circ$. The self-adjoint case is similar.
\end{proof}

Similarly, the $k$-min and $k$-max operations factor through certain products.

\begin{prop}
If $\mathcal{C}$ and $\mathcal{D}$ are closed and bounded matrix convex sets over Euclidean space, then 
\[ \Wmin{k}( \mathcal{C} \times_1 \mathcal{D}) = \Wmin{k}(\mathcal{C}) \times_1 \Wmin{k}(\mathcal{D})\]
and
\[ \Wmax{k}( \mathcal{C} \times \mathcal{D}) = \Wmax{k}(\mathcal{C}) \times \Wmax{k}(\mathcal{D}). \]
\end{prop}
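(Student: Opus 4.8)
The plan is to prove each equality directly from the definitions, avoiding polar duality, since the statement assumes neither that $0$ lies in the interior nor any nondegeneracy; the two identities are nonetheless consistent with the duality relations $(\mathcal{C} \times \mathcal{D})^\circ = \mathcal{C}^\circ \times_1 \mathcal{D}^\circ$ and $\Wmax{k}(\cdot)^\circ = \Wmin{k}((\cdot)^\circ)$, which could serve as a cross-check whenever they are available. Both sides of each equality are matrix convex sets defined at every level, so I would verify equality levelwise, exploiting the structural descriptions of $\Wmin{k}$ and $\Wmax{k}$ recalled in Section~\ref{sec:review}.

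For the $\Wmax{k}$ identity, write a point of the ambient space as a pair $Z = (X, Y)$ with $X$ in the $\mathcal{C}$-coordinates and $Y$ in the $\mathcal{D}$-coordinates. Since $(\mathcal{C} \times \mathcal{D})_k = \mathcal{C}_k \times \mathcal{D}_k$ and a single UCP map $\phi$ sends the joint tuple $(X,Y)$ to the joint tuple $(\phi(X), \phi(Y))$, the coordinate projections of $\mathcal{W}_k(X,Y)$ onto the two blocks are exactly $\mathcal{W}_k(X)$ and $\mathcal{W}_k(Y)$. Hence $\mathcal{W}_k(X,Y) \subseteq \mathcal{C}_k \times \mathcal{D}_k$ holds if and only if $(\phi(X), \phi(Y)) \in \mathcal{C}_k \times \mathcal{D}_k$ for every $\phi$, which by the elementary equivalence $\forall \phi\,(P \wedge Q) \Leftrightarrow (\forall \phi\, P) \wedge (\forall \phi\, Q)$ is equivalent to $\mathcal{W}_k(X) \subseteq \mathcal{C}_k$ and $\mathcal{W}_k(Y) \subseteq \mathcal{D}_k$, i.e.\ to $X \in \Wmax{k}(\mathcal{C})$ and $Y \in \Wmax{k}(\mathcal{D})$. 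This is precisely membership of $(X,Y)$ in the levelwise product $\Wmax{k}(\mathcal{C}) \times \Wmax{k}(\mathcal{D})$, so the two sets coincide at every level.

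For the $\Wmin{k}$ identity, set $\mathcal{F} := \Wmin{k}(\mathcal{C}) \times_1 \Wmin{k}(\mathcal{D})$. First I would check that $\mathcal{F}_k = (\mathcal{C} \times_1 \mathcal{D})_k$: because $\Wmin{k}$ leaves the $k$th level unchanged, evaluating the defining formula for $\times_1$ at level $k$ with inputs $\Wmin{k}(\mathcal{C})_k = \mathcal{C}_k$ and $\Wmin{k}(\mathcal{D})_k = \mathcal{D}_k$ reproduces exactly $(\mathcal{C} \times_1 \mathcal{D})_k$. It then remains to show $\mathcal{F}$ is itself $k$-minimal, i.e.\ $\Wmin{k}(\mathcal{F}) = \mathcal{F}$; granting this, $\mathcal{F} = \Wmin{k}(\mathcal{F}) = \Wmin{k}(\mathcal{F}_k) = \Wmin{k}(\mathcal{C} \times_1 \mathcal{D})$ by (\ref{eq:k_or_full}), which is the claim. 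The inclusion $\Wmin{k}(\mathcal{F}) \subseteq \mathcal{F}$ is automatic. For the reverse, recall $\mathcal{F}$ is the matrix convex hull of the generators $(\tilde{C}, 0)$ and $(0, \tilde{D})$ with $\tilde{C} \in \Wmin{k}(\mathcal{C})$ and $\tilde{D} \in \Wmin{k}(\mathcal{D})$; writing $\tilde{C} = \sum_i V_i^* C^{(i)} V_i$ as a matrix convex combination of points $C^{(i)} \in \mathcal{C}_k$ gives $(\tilde{C},0) = \sum_i V_i^* (C^{(i)},0) V_i$, a matrix convex combination of the points $(C^{(i)},0) \in (\mathcal{C} \times_1 \mathcal{D})_k = \mathcal{F}_k$, so $(\tilde{C},0) \in \Wmin{k}(\mathcal{F})$; symmetrically $(0,\tilde{D}) \in \Wmin{k}(\mathcal{F})$. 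Since $\Wmin{k}(\mathcal{F})$ is matrix convex and contains all the generators, it contains their matrix convex hull $\mathcal{F}$, giving $\mathcal{F} \subseteq \Wmin{k}(\mathcal{F})$ and hence equality.

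The step requiring the most care is the $k$-minimality of $\mathcal{F}$ in the last paragraph, which leans on the concrete description of $\Wmin{k}$ as matrix convex combinations of level-$k$ points (valid in the bounded setting, as emphasized in Section~\ref{sec:review}) together with the compatibility of the scalar conjugations defining $\times_1$ with those defining $\Wmin{k}$. I would also confirm that the reduction to a fixed number of summands, needed for closedness of $\Wmin{k}(\mathcal{F})$, is unaffected by inserting zero blocks; this is routine, but it is the place where the boundedness hypothesis is genuinely used.
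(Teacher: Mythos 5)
Your proof is correct and follows essentially the same route as the paper's: the $\Wmax{k}$ identity is exactly the coordinatewise/quantifier argument (via Arveson extension) that the paper dismisses as trivial, and your $\Wmin{k}$ argument --- interchanging the generators of the $\times_1$ hull with matrix convex combinations of level-$k$ points --- is the paper's two-containment proof repackaged as the statement that $\Wmin{k}(\mathcal{C}) \times_1 \Wmin{k}(\mathcal{D})$ is $k$-minimal with the correct $k$th level, followed by an appeal to (\ref{eq:k_or_full}). There is no gap.
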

\begin{proof}
The proof for $\Wmax{k}$ is trivial. For $\Wmin{k}$, it is immediate that the right hand side is the matrix convex hull of tuples $({\sf X}, {\sf 0})$ and $({\sf 0}, {\sf Y})$ where $X \in \mathcal{C}_k$ and $Y \in \mathcal{D}_k$, and hence it is contained in the left hand side. However, an arbitrary element of $\Wmin{k}( \mathcal{C} \times_1 \mathcal{D})$ is a matrix convex combination of points in its $k$th level, each of which is by definition a matrix convex combinations of two tuples $({\sf X}, 0)$ and $(0, {\sf Y})$ for $X \in \mathcal{C}_k$ and $Y \in \mathcal{D}_k$. Therefore the other containment also holds.
\end{proof}

\begin{prop}
Let $\mathcal{C}$ and $\mathcal{D}$ be closed and bounded matrix convex sets over Euclidean space. Then
\[ \beta_k(\mathcal{C} \times_1 \mathcal{D}) = \max(\beta_k(\mathcal{C}), \beta_k(\mathcal{D}))\]
and
\[ \gamma_k(\mathcal{C} \times \mathcal{D}) = \max(\gamma_k(\mathcal{C}), \gamma_k(\mathcal{D})). \]
Similarly, if ${\sf T}$ and ${\sf R}$ are tuples of operators on Hilbert space, then
\[ \beta_k(\sf T \, \smboxplus \, \sf R) = \max(\beta_k(\sf T), \beta_k(\sf R)) \]
and
\[ \gamma_k(\sf T \, \smboxplus_1 \, \sf R) = \max(\gamma_k(\sf T), \gamma_k(\sf R)). \]
\end{prop}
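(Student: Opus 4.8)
The plan is to reduce all four equalities to the two set-level statements and then lean on the factorizations of $\Wmin{k}$ and $\Wmax{k}$ through products proved in the previous proposition. First I would dispose of the operator-tuple versions: since $\mathcal{W}({\sf T}\,\smboxplus\,{\sf R}) = \mathcal{W}({\sf T})\times_1\mathcal{W}({\sf R})$ and $\mathcal{W}({\sf T}\,\smboxplus_1\,{\sf R}) = \mathcal{W}({\sf T})\times\mathcal{W}({\sf R})$, and since $\beta_k$ and $\gamma_k$ of a tuple are by definition those of its matrix range, the last two equalities are exactly the first two applied to $\mathcal{C}=\mathcal{W}({\sf T})$ and $\mathcal{D}=\mathcal{W}({\sf R})$. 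I would also record the two routine facts used throughout: both products are monotone under levelwise inclusion, and both commute with scaling, $r(\mathcal{A}\times\mathcal{B}) = (r\mathcal{A})\times(r\mathcal{B})$ and $r(\mathcal{A}\times_1\mathcal{B}) = (r\mathcal{A})\times_1(r\mathcal{B})$, each immediate from the levelwise description.

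The $\gamma_k$ identity is the clean one. Combining $\Wmax{k}(\mathcal{C}\times\mathcal{D}) = \Wmax{k}(\mathcal{C})\times\Wmax{k}(\mathcal{D})$ with scaling, the defining inclusion $\Wmax{k}(\mathcal{C}\times\mathcal{D})\subseteq r(\mathcal{C}\times\mathcal{D})$ becomes $\Wmax{k}(\mathcal{C})\times\Wmax{k}(\mathcal{D})\subseteq(r\mathcal{C})\times(r\mathcal{D})$. The key elementary observation is that an inclusion of honest Cartesian products factors: since every level of a nonempty matrix convex set is nonempty, fixing a point in one factor shows that $\mathcal{A}\times\mathcal{B}\subseteq\mathcal{A}'\times\mathcal{B}'$ holds if and only if $\mathcal{A}\subseteq\mathcal{A}'$ and $\mathcal{B}\subseteq\mathcal{B}'$. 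Hence the admissible radii for $\mathcal{C}\times\mathcal{D}$ form exactly the intersection of those for $\mathcal{C}$ and for $\mathcal{D}$, and taking infima gives $\gamma_k(\mathcal{C}\times\mathcal{D}) = \max(\gamma_k(\mathcal{C}),\gamma_k(\mathcal{D}))$.

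For the $\beta_k$ identity I would use $\Wmin{k}(\mathcal{C}\times_1\mathcal{D}) = \Wmin{k}(\mathcal{C})\times_1\Wmin{k}(\mathcal{D})$ and scaling to rewrite $\mathcal{C}\times_1\mathcal{D}\subseteq r\Wmin{k}(\mathcal{C}\times_1\mathcal{D})$ as $\mathcal{C}\times_1\mathcal{D}\subseteq(r\Wmin{k}(\mathcal{C}))\times_1(r\Wmin{k}(\mathcal{D}))$. The inequality $\beta_k(\mathcal{C}\times_1\mathcal{D})\le\max(\beta_k(\mathcal{C}),\beta_k(\mathcal{D}))$ is then immediate from monotonicity of $\times_1$, since $\mathcal{C}\subseteq r\Wmin{k}(\mathcal{C})$ and $\mathcal{D}\subseteq r\Wmin{k}(\mathcal{D})$ together force the product inclusion. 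For the reverse inequality I would try to read off the individual inclusions by coordinate projection: because $({\sf X},{\sf 0})\in\mathcal{C}\times_1\mathcal{D}$ for every ${\sf X}\in\mathcal{C}$ (take $A=I$, $B=0$), projecting the product inclusion onto the first block of coordinates ought to return $\mathcal{C}\subseteq r\Wmin{k}(\mathcal{C})$, and symmetrically for $\mathcal{D}$.

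The hard part is precisely this last projection step, since $\times_1$ compresses information in a way $\times$ does not. The projection of $\mathcal{A}\times_1\mathcal{B}$ onto the first coordinates is not $\mathcal{A}$ but the matrix convex hull of $\mathcal{A}\cup\{0\}$, i.e. its contractive compressions, so from $({\sf X},{\sf 0})\in(r\Wmin{k}(\mathcal{C}))\times_1(r\Wmin{k}(\mathcal{D}))$ one only learns that ${\sf X}$ is a contractive compression of a point of $r\Wmin{k}(\mathcal{C})$. To upgrade this to ${\sf X}\in r\Wmin{k}(\mathcal{C})$ one needs $0\in\Wmin{k}(\mathcal{C})$, equivalently $0\in\mathcal{C}_1$, which holds automatically under the interior hypothesis used throughout the paper and then propagates to every level by closure under direct sums. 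Under that hypothesis the argument closes; alternatively, and more cleanly, I expect the whole $\beta_k$ statement to follow from the already-established $\gamma_k$ statement by polar duality, using $(\mathcal{C}\times_1\mathcal{D})^\circ = \mathcal{C}^\circ\times\mathcal{D}^\circ$, the exchange of $\Wmin{k}$ and $\Wmax{k}$ under the polar dual in (\ref{eq:polar_dual_min_max}), and the resulting identity $\beta_k(\mathcal{C}) = \gamma_k(\mathcal{C}^\circ)$ valid when $0$ is an interior point.
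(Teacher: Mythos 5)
The paper states this proposition with no proof at all, so there is no argument of record to compare yours against; it is evidently meant to be read off from the two preceding propositions, and that is exactly the route you take, so your write-up supplies what the paper omits. Your reduction of the operator-tuple statements to the set statements via $\cW({\sf T}\,\smboxplusraised\,{\sf R})=\cW({\sf T})\times_1\cW({\sf R})$ and $\cW({\sf T}\,\smboxplusraised_1\,{\sf R})=\cW({\sf T})\times\cW({\sf R})$ is correct, as is the $\gamma_k$ identity via $\Wmax{k}(\mathcal{C}\times\mathcal{D})=\Wmax{k}(\mathcal{C})\times\Wmax{k}(\mathcal{D})$ and the factoring of inclusions of honest Cartesian products. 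More valuably, you have isolated the one point where the argument is not purely formal: in the lower bound $\beta_k(\mathcal{C}\times_1\mathcal{D})\ge\max(\beta_k(\mathcal{C}),\beta_k(\mathcal{D}))$, knowing $({\sf X},{\sf 0})\in\bigl(r\,\Wmin{k}(\mathcal{C})\bigr)\times_1\bigl(r\,\Wmin{k}(\mathcal{D})\bigr)$ only gives ${\sf X}=A^*{\sf X}'A$ with $A^*A\le I$ possibly strict (the matrix $B$ need not vanish when $0\in\mathcal{D}_1$), and absorbing the defect into a matrix convex combination requires $0\in\mathcal{C}_1$; your two repairs --- adjoining $0$ as a summand, or dualizing via $(\mathcal{C}\times_1\mathcal{D})^\circ=\mathcal{C}^\circ\times\mathcal{D}^\circ$ together with $\beta_k(\mathcal{C})=\gamma_k(\mathcal{C}^\circ)$ --- are both sound under the paper's standing positioning hypothesis. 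Do note that the proposition as printed assumes only closedness and boundedness, not $0\in\mathcal{C}_1\cap\mathcal{D}_1$, so strictly speaking your proof establishes the $\beta_k$ identity only under that extra hypothesis; this is a deficiency of the paper's statement rather than of your argument, but it should be flagged. A smaller point of the same kind, affecting both identities: passing from ``the admissible radii for the product are the intersection of the admissible radii for the factors'' to ``the infima satisfy the max formula'' tacitly uses that each admissible set is upward closed, which again follows from $0\in\mathcal{C}_1$ (so that $t\,\mathcal{C}\subseteq\mathcal{C}$ for $0<t<1$) but not in general.
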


Therefore, our results are consistent with the fact that for finite-dimensional operator systems, $\oplus$ (corresponding to $\times_1$) preserves $1$-exactness and $\oplus_1$ (corresponding to $\times$) preserves the lifting property. However, we note that as in \cite[Corollary 10.13]{Ka-nuc} and \cite[Corollary 10.14]{Ka-nuc}, $\oplus_1$ does not preserve $1$-exactness and $\oplus$ does not preserve the lifting property.

\section*{Acknowledgments}

VP is partially supported by NSERC Discovery Grant 03784.


\begin{thebibliography}{99}

\bibitem{An} Ando, Tsuyoshi. Structure of operators with numerical radius one, {\it Acta Sci. Math. (Szeged)} 34 (1973), 11-15.

\bibitem{Arv_sub}  Arveson, William B. On subalgebras of $C^*$-algebras. {\it Bull. Amer. Math. Soc.} 75 (1969), 790-794. 

\bibitem{DDSS}  Davidson, Kenneth R.; Dor-On, Adam; Shalit, Orr Moshe; and Solel, Baruch. Dilations, inclusions of matrix convex sets, and completely positive maps. {\it Int. Math. Res. Not.} 13 (2017), 4069-4130. Corrected version in arXiv:1601.07993.

\bibitem{Adam-thesis} Dor-On, Adam. Techniques in operator algebras: classification, dilation and non-commutative boundary theory. Dissertation, University of Waterloo, 2017. http://hdl.handle.net/10012/12131.

\bibitem{EF} Effros, Edward G. and Winkler, Soren. Matrix convexity: operator analogues of the bipolar and Hahn-Banach theorems, {\it J. Funct. Anal.} 144 (1997), 117-152.

\bibitem{Ge-unitaries} Gerhold, Malte; Pandey, Satish L.; Shalit, Orr; and Solel, Baruch. Dilations of unitary tuples. arXiv:2006.01869.

\bibitem{Ge-Sh} Gerhold, Malte and Shalit, Orr. Dilations of $q$-commuting unitaries. arXiv:1902.10362. To appear in {\it Int. Math. Res. Not.} doi:10.1093/imrn/rnaa093.

\bibitem{Ge-random} Gerhold, Malte and Shalit, Orr. On the matrix range of random matrices. arXiv:1911.12102. To appear in {\it J. Operator Theory.}

\bibitem{Gol-Lup} Goldbring, Isaac and Lupini, Martino. Model-theoretic aspects of the Gurarij operator system. {\it Israel J. Math.} 226 (2018), 87-118.

\bibitem{Gol-Sing}  Goldbring, Isaac and Sinclair, Thomas. Omitting types in operator systems. {\it Indiana Univ. Math. J.} 66 (2017), no. 3, 821-844.

\bibitem{Helton} Helton, J. William; Klep, Igor; and McCullough, Scott. The tracial Hahn-Banach theorem, polar duals, matrix convex sets, and projections of free spectrahedra.  {\it J. Eur. Math. Soc.} 19 (2017), no. 6, 1845-1897.

\bibitem{HKMS} Helton, J. William; Klep, Igor; McCullough, Scott; and Schweighofer, Markus. Dilations, linear matrix inequalities, the matrix cube problem and beta distributions. {\it Mem. Amer. Math. Soc.} 257 (2019), no. 1232, vi+106 pp.

\bibitem{Ka-nuc} Kavruk, Ali S., Nuclearity related properties in operator systems. {\it J. Operator Theory} 71 (2014), no. 1, 95-156.

 \bibitem{KPTT} Kavruk, Ali S.; Paulsen, Vern I.; Todorov, Ivan G.; and Tomforde, Mark.  Quotients, exactness, and nuclearity in the operator system category, {\it Adv. Math.} 235 (2013), 321-360. 

\bibitem{Ki} Kirchberg, Eberhard. On non-semisplit extensions, tensor products and exactness of group $C^*$-algebras, {\it Invent. Math.} 112 (1993) 449-489. 

\bibitem{Ki-Wa} Kirchberg, Eberhard and Wassermann, Simon. $C^*$-algebras generated by operator systems, {\it J. Funct. Anal.} 155 (1998) 324-351.

\bibitem{Kriel} Kriel, Tom-Lukas. An introduction to matrix convex sets and free spectrahedra. {\it Complex Anal. Oper. Theory} 13 (2019), 3251-3335.

\bibitem{Li_essMR} Li, Chi-Kwong; Paulsen, Vern I.; and Poon, Yiu-Tung. Preservation of the joint essential matricial range. {\it Bull. Lond. Math. Soc.} 51 (2019), no. 5, 868-876.

\bibitem{Lupini-limits} Lupini, Martino, Fra{\"{i}}ss{\'{e}} limits in functional analysis. {\it Adv. Math.} 338 (2018) 93-174.

\bibitem{Luthra} Luthra, Preeti and Kumar, Ajay. Embeddings and $C^*$-envelopes of exact operator systems. {\it Bull. Aust. Math. Soc.}, 96 (2017), no. 2, 274-285.

\bibitem{Ozawa} Ozawa, Narutaka. About the QWEP conjecture. {\it Int. J. Math.} 15 (2004), no. 5, 501-530.

\bibitem{Pas_SSM} Passer, Benjamin. Shape, scale, and minimality of matrix ranges. {\it Trans. Amer. Math. Soc.} 372 (2019), no. 2, 1451-1484.
 
\bibitem{Pas-Sh-So} Passer, Benjamin; Shalit, Orr Moshe; and Solel, Baruch. Minimal and maximal matrix convex sets. {\it J. Funct. Anal.} 274 (2018), no. 11, 3197-3253.

\bibitem{Pa-1982} Paulsen, Vern I. Preservation of essential matrix ranges by compact perturbations. {\it J. Operator Theory} 8 (1982), no. 2, 299-317.

\bibitem{Pisier} Pisier, Gilles. Introduction to operator space theory. {\it London Mathematical Society Lecture Note Series}, 294. Cambridge University Press, Cambridge, 2003. viii+478 pp. ISBN: 0-521-81165-1.

\bibitem{Shalit-tour} Shalit, Orr. Dilation theory: a guided tour. arXiv:2002.05596. To appear in {\it Oper. Theory
  Adv. Appl.}, Birkh\"auser.

\bibitem{SW} Smith, Roger R. and Ward, Joseph D., Matrix ranges for Hilbert space operators, {\it Amer. J. Math.} 102 (1980), 1031-1081.


\bibitem{Xh-phd} Xhabli, Blerina. Universal operator system structures on ordered spaces and their applications. Dissertation, University of Houston, 2009. 122 pp. ISBN: 978-1109-70014-5, ProQuest LLC.


\bibitem{Xh-JFA} Xhabli, Blerina. The super operator system structures and their applications in quantum entanglement theory. {\it J. Funct. Anal.} 262 (2012), no. 4, 1466-1497.


\end{thebibliography}
\end{document}